\newcommand{\RR}{\mathbb{R}}
\newcommand{\x}[1]{\operatorname{#1}}
\newcommand{\dom}{\x{D}}
\newcommand{\spec}{\x{Spec}}
\renewcommand{\frak}[1]{\mathfrak{#1}}
\newcommand{\dx}{\mathrm{d}x}
\renewcommand{\geq}{\geqslant}
\renewcommand{\leq}{\leqslant}
\newtheorem{thm}{Theorem}[section]
\newtheorem{lem}[thm]{Lemma}
\newtheorem{rem}[thm]{Remark}
\newtheorem{col}[thm]{Corollary}
\title{On the quality of complementary bounds for eigenvalues}
\author{Lyonell Boulton$^1$ \and Aatef Hobiny$^2$}
\date{4th August 2014}
\begin{document}

\maketitle

\footnotetext[1]{Corresponding author. Email \texttt{L.Boulton@hw.ac.uk} tel +441314513973 fax +444513432.}
\footnotetext[12]{Department of Mathematics and 
Maxwell Institute for Mathematical Sciences, Heriot-Watt University, Edinburgh, EH14 4AS, UK.}
\footnotetext[2]{Department of Mathematics, Faculty of Science, King Abdulaziz University, P.O. Box. 80203, Jeddah 21589, Saudi Arabia.}

\abstract{A concrete formulation of the Lehmann-Maehly-Goerisch method for semi-definite self-adjoint operators with compact resolvent is considered. Precise rates of convergence are determined in terms of how well the trial spaces capture the spectral subspace of the operator. Optimality of the
choice of a shift parameter which is intrinsic to the method is also examined. The main theoretical findings are illustrated by means of a few numerical experiments involving one-dimensional Schr{\"o}dinger operators.}

\

\

\

\noindent \textbf{Keywords.} Lehmann-Maehly-Goerisch method, Zimerman-Mertins method, eigenvalue computation, complementary eigenvalue bounds.

\pagebreak

\section{Introduction}

The neat formulation by Zimmermann and Mertins \cite{1995Zimmer} (see also \cite[Section~6]{2004Davies}) of the Lehmann-Maehly-Goerisch method \cite{1985Goerisch,1980Goerisch,1949Lehmann,1950Lehmann,1952Maehly} (see also \cite[Chapter~4.11]{1974Weinberger}), has recently shown to be a reliable tool for computing eigenvalue enclosures \cite{2001Behnke,2004Davies,2009Behnke,2011Boulton,2013Boulton,2014Boulton}. In its most basic framework, this formulation involves fixing a ``shift'' parameter $t\in \mathbb{R}$ and then characterising the spectrum which is adjacent to $t$ by means of
a combination of the Variational Principle with the Spectral Mapping Theorem.

The present paper is devoted to re-examining this most basic setting for semi-definite self-adjoint operators with a compact resolvent. Two main contributions are to be highlighted. On the one hand, we determine how the choice of $t$ affects the quality of the eigenvalue bound. On the other hand, we establish explicit  convergence estimates, in terms of how well spectral subspaces in a neighbourhood of $t$ are captured by the underlying trial subspaces. The latter is closely linked with 
a similar convergence analysis pursued in \cite{20133Boulton} for the so-called quadratic method. 

In addition to these theoretical contributions, we apply our findings in the detailed study of a concrete model. For this purpose we consider computation of upper and lower bounds for the eigenvalues of one-dimensional Schr{\"o}dinger operators with potential singular at infinity by means of the finite element method. Similar ideas have been realised for the Helmholtz equation \cite{2001Behnke}, calculation of sloshing frequencies \cite{2009Behnke} and the MHD operator \cite{2011Boulton}. See also the recent  manuscripts \cite{2013Boulton} and \cite{2014Boulton}.

The first part of the paper is concerned with the abstract theory. 
Section \ref{zimesec} is devoted to a formulation of the most basic framework in the approach described in \cite{1995Zimmer}. In Section~\ref{impzimesec} we determine how the choice of the parameter $t$ affects the quality of the eigenvalue bounds
(Theorem~\ref{lem52}). Properties of convergence are then established in Section~\ref{convergencezimmermann}. Our main contribution in this respect is summarised by Theorem~\ref{pro63}.

The second part of the paper is devoted to the concrete model. 
Section~\ref{zm_numerical} describes how to discretise the Schr{\"o}dinger operator by means of the finite element method. There we establish precise convergence rates in this particular case. In Section~\ref{numex} we include various computational experiments performed on the harmonic and the anharmonic oscillators.

\subsection*{Notation}

Everywhere below $\mathcal{H}$ denotes a Hilbert space with inner product $\langle \cdot, \cdot \rangle$ and norm $\left\|\cdot\right\|$. The  self-adjoint operator $A=A^*:\dom (A) \longrightarrow \mathcal{H}$, will always be assumed to be semi-bounded below and its resolvent operator \[(A-z)^{-1}:\mathcal{H} \longrightarrow \dom (A)\] will be assumed to be compact for one and hence all $z\in \mathbb{C}$ outside the spectrum. Under these hypotheses, 
the spectrum of $A$, $ \spec(A)$, is always an increasing sequence of isolated eigenvalues of finite multiplicity accumulating at $+\infty$. We will write
\[
\spec(A)=\{\lambda_1\leq \lambda_2\leq \ldots \}
\]
counting multiplicities with the index.

Here, and everywhere else in this paper, $t$ will denote a real parameter. We will often leave implicit the dependence of some of the $t$-dependant quantities, whenever this is sufficiently clear from the context. This will be so the case especially in the proofs of some of the main statements.

For $u,\,v\in \dom(A)$, we will write
\[
     \frak{a}^0(u,v)=\langle u,v \rangle \qquad \frak{a}^1(u,v)=\langle A u,v \rangle \qquad
      \frak{a}^2(u,v)=\langle Au,Av \rangle
\]
\[
 \frak{a}^1_t(u,v)=\langle (A-t) u,v \rangle \qquad  \frak{a}^2_t(u,v)=\langle (A-t)u,(A-t)v \rangle
\]
so that $\frak{a}^1=\frak{a}^1_0$ and $\frak{a}^2=\frak{a}^2_0$. In general, \[\frak{a}^2_t:\dom (A)\times \dom (A)\longrightarrow \mathbb{C},\] are closed quadratic forms, but this is not necessarily the case for $\frak{a}^1_t$ on $\dom (A)\times \dom (A)$. Moreover $(\dom (A),\frak{a}^2_t)$ defines a Hilbert space if and only if $t\not\in \spec(A)$. We will occasionally write $|u|_t=\frak{a}_t^2(u,u)^{1/2}$.  

Let $\mathcal{L} \subset \dom(A)$ be a given subspace of dimension $n$ such that 
\[
\mathcal{L}=\x{Span} \{b_j\}^n_{j=1}
\]
for a suitable linearly independent set $\{b_j\}^n_{j=1}$.
We will write 
\[
\mathbf{A}_{l}=\left[ \frak{a}^l (b_j,b_k)\right]^n_{jk=1} \in \mathbb{C}^{n \times n}
\quad \text{and} \quad \mathbf{A}_{l,t}=\left[ \frak{a}^l_t(b_j,b_k)\right]^n_{jk=1} \in \mathbb{C}^{n \times n}.
\]

\section{Complementary eigenvalue bounds}   \label{zimesec}

The most basic setting of the strategy established in the paper \cite{1995Zimmer} can be summarised as follows. Let  $t\in \RR$ be fixed.  In order to give certified bounds  for the spectrum of $A$ in the vicinity of $t$, we seek for the eigenvalues of the following problem: 
find $\tau\in \RR$ and $u\in \mathcal{L} \setminus \{0\}$, such that
\begin{equation} \label{eq31}
 \tau \;  \frak{a}^2_t(u,v)=  \frak{a}^1_t(u,v) \quad \forall v\in \mathcal{L}.
\end{equation}

\subsection{Adjacent eigenvalues}

Consider the case of the eigenvalues which are immediately adjacent to $t$, both to the left and to the right. Let $x < y$ be such that \[(x,y)\cap \spec(A)=\{\lambda\}.\] Assume that $\mathcal{L}$ is such that 
\begin{equation} \label{eq32}
 x < \max_{u\in\mathcal{L}\setminus \{0\}} \frac{\frak{a}^1(u,u)}{\frak{a}^0(u,u)} \quad \text{and} \quad y > \min_{u\in\mathcal{L}\setminus \{0\}} \frac{\frak{a}^1(u,u)}{\frak{a}^0(u,u)}. 
\end{equation}
Then \eqref{eq31} for $t=x$ and $t=y$ leads to upper and lower bounds for $\lambda$. Indeed, denote the extremal eigenvalues of \eqref{eq31} by
\begin{equation}  \label{eq33}
  \tau^-_1(y)=\min_{u\in\mathcal{L}}\frac{\frak{a}^1_y(u,u)}{\frak{a}^2_y(u,u)}
\quad \text{and} \quad 
\tau^+_1(x)=\max_{u\in\mathcal{L}} \frac{\frak{a}^1_x(u,u)}{\frak{a}^2_x(u,u)}.
\end{equation}
The following statement is a simplified version of \cite[Theorem~2.4]{1995Zimmer}. See also \cite[Theorem~11]{2004Davies}
and Lemma~\ref{lem42} below.

\begin{lem} \label{lem31}
If \eqref{eq32} holds true, then $ \tau^+_1(x)>0$, $\tau^-_1(y)<0$ and
\begin{equation*}
 y+\frac{1}{\tau^-_1(y)} \leqslant \lambda \leqslant x+\frac{1}{\tau^+_1(x)}.
\end{equation*}
\end{lem}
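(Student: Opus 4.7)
My approach is to recast the extremal Rayleigh quotients $\tau_1^\pm(t)$ as Rayleigh quotients of the compact self-adjoint operator $(A-t)^{-1}$ over the transformed subspace $(A-t)\mathcal{L}$, and then invoke the classical min-max (resp.\ max-min) principle together with the Spectral Mapping Theorem. This is permissible because the bilateral strict inequalities in \eqref{eq32} implicitly force both $x\not\in\spec(A)$ and $y\not\in\spec(A)$, so that $\mathfrak{a}^2_t$ is positive definite on $\mathcal{L}$ in both cases and $(A-t)^{-1}\in\mathcal{L}(\mathcal{H})$ exists.

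For the sign assertions, hypothesis \eqref{eq32} directly provides some $u\in\mathcal{L}$ with $\mathfrak{a}^1_x(u,u)>0$, so $\tau^+_1(x)\geq \mathfrak{a}^1_x(u,u)/\mathfrak{a}^2_x(u,u)>0$; an identical argument with $y$ gives $\tau^-_1(y)<0$. For the upper bound, I would fix $t=x$ and perform the substitution $w=(A-x)u$, which is a linear bijection between $\mathcal{L}\setminus\{0\}$ and $(A-x)\mathcal{L}\setminus\{0\}$ and satisfies the key identity
\[
\frac{\mathfrak{a}^1_x(u,u)}{\mathfrak{a}^2_x(u,u)}=\frac{\langle (A-x)u,u\rangle}{\|(A-x)u\|^2}=\frac{\langle w,(A-x)^{-1}w\rangle}{\|w\|^2}.
\]
Enlarging the set over which the supremum is taken from $(A-x)\mathcal{L}\setminus\{0\}$ to all of $\mathcal{H}\setminus\{0\}$ and then invoking the variational characterisation of the largest eigenvalue of the compact self-adjoint operator $(A-x)^{-1}$, combined with the Spectral Mapping Theorem, yields
\[
\tau^+_1(x)\leq \max_{w\in\mathcal{H}\setminus\{0\}}\frac{\langle w,(A-x)^{-1}w\rangle}{\|w\|^2}=\max_{j}\frac{1}{\lambda_j-x}=\frac{1}{\lambda-x},
\]
the last equality because the hypothesis $(x,y)\cap\spec(A)=\{\lambda\}$ singles out $\lambda$ as the smallest eigenvalue of $A$ exceeding $x$. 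Since both $\tau^+_1(x)$ and $1/(\lambda-x)$ are positive, inversion and rearrangement give $\lambda\leq x+1/\tau^+_1(x)$.

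The lower bound is symmetric: one sets $t=y$, replaces max with min, and observes that $\lambda$ is now the largest eigenvalue of $A$ lying strictly below $y$, so that $\min_{j} 1/(\lambda_j-y)=1/(\lambda-y)<0$. Inversion on the negative half-line (where $s\mapsto 1/s$ is decreasing) flips the inequality once more to produce $\lambda\geq y+1/\tau^-_1(y)$.

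The only real subtlety is step two: identifying the extremum of the Rayleigh quotient of $(A-t)^{-1}$ over the whole Hilbert space with $1/(\lambda-t)$. This hinges on the hypothesis that $(x,y)\cap\spec(A)$ is the singleton $\{\lambda\}$, which pins down $\lambda$ as the eigenvalue of $A$ immediately adjacent to $t$ on the relevant side; once this is in place, the rest is the elementary bookkeeping of the direction of inequalities under reciprocation on each of the two half-lines, which is where the sign information on $\tau^\pm_1$ established in the first step plays its role.
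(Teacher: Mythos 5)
Your proof follows essentially the same route as the paper's: transform the Rayleigh quotient via $w=(A-t)u$ into a Rayleigh quotient of $(A-t)^{-1}$ over $(A-t)\mathcal{L}$, use monotonicity of the extremum under enlargement to all of $\mathcal{H}$, and identify the extremum via the Spectral Mapping Theorem with $(\lambda-t)^{-1}$, finishing by inversion with the appropriate sign. The argument is correct, with one slip in a side remark: \eqref{eq32} does \emph{not} force $x,y\notin\spec(A)$, since it constrains only the Rayleigh quotient of $A$ on the finite-dimensional space $\mathcal{L}$ and says nothing about whether $x$ or $y$ is an eigenvalue; the invertibility of $A-x$ and $A-y$ is a tacit standing hypothesis (the paper's own proof uses it without flagging it either), so you should simply assume it rather than claim it follows from \eqref{eq32}.
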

\begin{proof} Condition \eqref{eq32} on $y$ implies immediately $\tau^-_1(y)<0$. Let \[\widehat{\mathcal{L}}=(A-y)\mathcal{L}.\] According to the Spectral Mapping Theorem and the Min-max Principle, 
\begin{align*}
 (\lambda-y)^{-1}&=\min[\spec(A-y)^{-1}]=\min_{v\in \mathcal{H}} \frac{\langle (A-y)^{-1}v,v\rangle }{\|v\|^2} \\
& \leqslant \min_{v\in \widehat{\mathcal{L}}} \frac{\langle (A-y)^{-1}v,v\rangle }{\|v\|^2}=\min_{u\in\mathcal{L}}\frac{\langle (A-y)u,u\rangle }{\langle (A-y)u,(A-y)u\rangle }\\
&=\min_{u\in\mathcal{L}}\frac{\frak{a}^1_y(u,u)}{\frak{a}^2_y(u,u)}=\tau^-_1(y).
\end{align*}
This ensures the inequality on the left hand side. 

The complementary inequality and the statement for $x$ are shown analogously.
\end{proof}

\subsection{Further eigenvalues}
Let $\ell\equiv \ell(t)$  be the number of eigenvalues of $A$ which are below $t$ counting multiplicity. Here and elsewhere we convey in writing  $\ell(t)=0$ when $t<\lambda_1$, as well as $\lambda_0=-\infty$.

As we shall see next, according to the Min-max Principle, the eigenvalues of $A$ below $t$ are characterised by the quantities
\begin{equation} \label{eq34}
  \mu_j^-\equiv\mu_j^-(t)=\min_{\substack {V\subset \dom(A)\\ \x{dim} V=j}} \; \max_{\substack {u\in V \\ u \neq 0}} \; \frac{\frak{a}^1_t(u,u)}{\frak{a}^2_t(u,u)} \qquad \forall j=1,\ldots,\ell(t)
\end{equation}
and those above $t$ are characterise by the quantities
\begin{equation} \label{eq34plus}
  \mu_j^+\equiv \mu_j^+(t)=\max_{\substack {V\subset \dom(A)\\ \x{dim} V=j}} \; \min_{\substack {u\in V \\ u \neq 0}} \; \frac{\frak{a}^1_t(u,u)}{\frak{a}^2_t(u,u)} \qquad \forall j\in \mathbb{N}.
\end{equation}
The following lemma is exactly \cite[Theorem~1.1]{1995Zimmer}.
Here and everywhere below the index $j$ will count multiplicities.

\begin{lem} \label{lem42}
Let $t\not\in \spec(A)$. Then
\[
     \mu_j^-(t)=\frac{1}{\lambda_{\ell(t)-j+1}-t} \qquad \qquad \forall j=1,\ldots,\ell(t)
\]
and
\[
      \mu_j^+(t)=\frac{1}{\lambda_{\ell(t)+j}-t} \qquad \qquad \forall j\in\mathbb{N} .
\]
\end{lem}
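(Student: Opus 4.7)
The plan is to reduce both identities to the classical Min-max Principle applied to the compact self-adjoint operator $K=(A-t)^{-1}$, via the substitution $v=(A-t)u$. Since $t\notin\spec(A)$, the map $A-t:\dom(A)\to\mathcal{H}$ is a bijection, so pushing a subspace $V\subset\dom(A)$ of dimension $j$ to $W=(A-t)V\subset\mathcal{H}$ yields a bijection between $j$-dimensional subspaces of $\dom(A)$ and of $\mathcal{H}$. Under this substitution,
\[
   \frac{\frak{a}^1_t(u,u)}{\frak{a}^2_t(u,u)}
   =\frac{\langle (A-t)u,u\rangle}{\|(A-t)u\|^2}
   =\frac{\langle v,(A-t)^{-1}v\rangle}{\|v\|^2},
\]
so the Rayleigh quotients of $A-t$ over $\dom(A)$ coincide with those of $(A-t)^{-1}$ over $\mathcal{H}$.

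Next, I would invoke the Spectral Mapping Theorem: the eigenvalues of $K$ are precisely $\{(\lambda_k-t)^{-1}\}_{k\in\mathbb{N}}$, with the same eigenvectors as $A$, and $0$ as the sole accumulation point. The negative eigenvalues of $K$ correspond to the indices $k\leq \ell(t)$, and they satisfy
\[
  \frac{1}{\lambda_{\ell(t)}-t}<\frac{1}{\lambda_{\ell(t)-1}-t}<\ldots<\frac{1}{\lambda_1-t}<0
\]
(there are exactly $\ell(t)$ of them, counted with multiplicity), while the remaining eigenvalues $\{(\lambda_{\ell(t)+j}-t)^{-1}\}_{j\in\mathbb{N}}$ are strictly positive and decrease monotonically to $0$.

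With this picture in hand, the first identity follows by applying the standard Min-max characterisation of the $j$-th eigenvalue from below of a self-adjoint operator whose spectrum starts with at least $\ell(t)$ isolated eigenvalues below the essential spectrum $\{0\}$. For $1\leq j\leq \ell(t)$, the formula
\[
\min_{\dim V=j}\,\max_{v\in V\setminus\{0\}}\frac{\langle Kv,v\rangle}{\|v\|^2}
\]
returns the $j$-th smallest eigenvalue of $K$, which is $(\lambda_{\ell(t)-j+1}-t)^{-1}$. Symmetrically, the $\max$-$\min$ formula returns the $j$-th largest eigenvalue of $K$, namely $(\lambda_{\ell(t)+j}-t)^{-1}$; because $K$ has infinitely many strictly positive eigenvalues, this is valid for every $j\in\mathbb{N}$.

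The only delicate point is verifying that passing from $V\subset\dom(A)$ to $W=(A-t)V\subset\mathcal{H}$ does not inadvertently enlarge or shrink the variational problem, and that the Min-max principle applies at index $j\leq\ell(t)$ despite $K$ having only finitely many eigenvalues below its essential spectrum $\{0\}$; both points are handled by the bijectivity of $A-t$ and the fact that the $\ell(t)$ negative eigenvalues of $K$ lie strictly below every element of $\spec(K)\setminus\{(\lambda_k-t)^{-1}:k\leq \ell(t)\}$.
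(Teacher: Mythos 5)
Your proposal is correct and follows essentially the same route as the paper: both reduce the problem via the substitution $v=(A-t)u$ to a min-max/max-min characterisation of the eigenvalues of the compact self-adjoint operator $(A-t)^{-1}$, invoke the bijectivity of $A-t$ to match $j$-dimensional subspaces of $\dom(A)$ with those of $\mathcal{H}$, and then use the Spectral Mapping Theorem to count and order the negative and positive eigenvalues of $(A-t)^{-1}$. Your closing remark about why the min-max principle applies at index $j\leq\ell(t)$ (the $\ell(t)$ negative eigenvalues of $(A-t)^{-1}$ sit strictly below the essential spectrum $\{0\}$) is a slightly more explicit version of what the paper leaves implicit, but it does not change the argument.
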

\begin{proof}
\begin{align*}
 \mu_j^-&=\min_{\substack {\mathcal{V}\subset \dom(A)\\ \x{dim} \mathcal{V}=j}} \; \max_{\substack {u\in \mathcal{V} \\ u \neq 0}} \; \frac{\frak{a}^1_t(u,u)}{\frak{a}^2_t(u,u)}\\
&=\min_{\substack {\mathcal{V}\subset \dom(A)\\ \x{dim} \mathcal{V}=j}} \max_{\substack {u\in \mathcal{V} \\ u \neq 0}} \frac{\langle (A-t)u,u\rangle }{\langle (A-t)u,(A-t)u\rangle }\\
&=\min_{\substack {\mathcal{W} \subset \mathcal{H} \\ \x{dim} \mathcal{W}=j}} \max_{\substack {v\in \mathcal{W}}} \frac{\langle v,(A-t)^{-1}v\rangle }{\langle v,v\rangle }.
\end{align*}
Here
\[
    \mathcal{V}=(A-t)^{-1}\mathcal{W},  \qquad \mathcal{W}=(A-t)\mathcal{V}
\]
and $\x{dim}\mathcal{V}=\x{dim}\mathcal{W}$ is guaranteed due to the invertibility of $(A-t)$.
Therefore $\mu_j^-<0$ is the eigenvalue of $(A-t)^{-1}$ which is on the $j$th position counting multiplicities right to left from $0$. As there are exactly $\ell$ eigenvalues of $A$ below $t$, according to the Spectral Mapping Theorem, then there are exactly $\ell$ of these $\mu_j^-$ which are negative. Due to the ordering of the eigenvalues of $A$   relative to $t$, the index $j$ on the $\mu^-_j$ corresponds to that of  $\lambda_{\ell-j+1}$. Thus
\[
\mu_j^-=\frac{1}{\lambda_{\ell-j+1}-t}.
\]

For the other case we proceed analogously, taking into account the fact that we have infinitely many eigenvalues of $A$ above $t$, which in turns become the positive $\mu^+_j$ accumulating at $0$ from the right.
\end{proof}

This statement motivates the following definition. Assume that  \eqref{eq31} has exactly $m^-\equiv m^-(t)$ negative eigenvalues and $m^+\equiv m^+(t)$ positive eigenvalues. We will see below that $m^-=\ell$ whenever $\mathcal{L}$ is sufficiently close to the eigenspace associated to $\{\lambda_1,\ldots,\lambda_\ell\}$. 
Let $\tau_j^-\equiv \tau_j^-(t)$ denote these negative eigenvalues and $\tau_j^+\equiv \tau_j^+(t)$ denote these positive eigenvalues, respectively, for $j=1,\ldots,m^\pm(t)$.

\begin{lem}      \label{finite}
For $t\not \in \spec(A)$,  
\begin{equation} \label{eq35}
 \tau^-_j(t)=\min_{\substack {\mathcal{V}\subset \mathcal{L} \\ \x{dim} \mathcal{V}=j}} \; \max_{\substack {u\in \mathcal{V} \\ u \neq 0}} \; \frac{\frak{a}^1_t(u,u)}{\frak{a}^2_t(u,u)} \qquad \forall j=1,\ldots,m^-(t)
\end{equation}
and
\begin{equation} \label{eq35plus}
 \tau^+_j(t)=\max_{\substack {\mathcal{V}\subset \mathcal{L} \\ \x{dim} \mathcal{V}=j}} \; \min_{\substack {u\in \mathcal{V} \\ u \neq 0}} \; \frac{\frak{a}^1_t(u,u)}{\frak{a}^2_t(u,u)} \qquad \forall j=1,\ldots,m^+(t).
\end{equation}
\end{lem}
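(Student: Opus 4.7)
The plan is to reduce the generalised eigenvalue problem \eqref{eq31} to a standard self-adjoint eigenvalue problem on a finite-dimensional Hilbert space, and then invoke the classical Courant--Fischer min-max principle. First I would observe that, since $t\not\in\spec(A)$, the operator $A-t$ is invertible on $\dom(A)$, so $\frak{a}^2_t(u,u)=\|(A-t)u\|^2>0$ for every $u\in\mathcal{L}\setminus\{0\}$. Hence $\frak{a}^2_t$ restricts to a genuine inner product on $\mathcal{L}$, and the form $\frak{a}^1_t$ remains Hermitian because $A$ is self-adjoint. Consequently $(\mathcal{L},\frak{a}^2_t)$ is a finite-dimensional Hilbert space on which \eqref{eq31} is a symmetric definite pencil.

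Next I would appeal to the Riesz representation theorem inside $(\mathcal{L},\frak{a}^2_t)$ to produce the unique linear map $T:\mathcal{L}\to\mathcal{L}$ with $\frak{a}^1_t(u,v)=\frak{a}^2_t(Tu,v)$ for all $u,v\in\mathcal{L}$; Hermiticity of $\frak{a}^1_t$ forces $T$ to be $\frak{a}^2_t$-self-adjoint. The eigenvalues of \eqref{eq31} then coincide with those of $T$ (counting multiplicities), and the Rayleigh quotient $\frak{a}^1_t(u,u)/\frak{a}^2_t(u,u)$ equals the Rayleigh quotient of $T$ with respect to the inner product $\frak{a}^2_t$. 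Equivalently, at matrix level, one may symmetrise \eqref{eq31} by conjugating with the Cholesky factor of $\mathbf{A}_{2,t}$.

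The conclusion then drops out of the standard finite-dimensional Courant--Fischer principle applied to $T$ on $(\mathcal{L},\frak{a}^2_t)$: the $j$-th smallest eigenvalue of $T$ is the min-max over $j$-dimensional subspaces of $\mathcal{L}$, and the $j$-th largest is the max-min. By definition, $\tau^-_j(t)$ for $j=1,\ldots,m^-(t)$ are the negative eigenvalues of \eqref{eq31} arranged in increasing order, so they are precisely the first $m^-(t)$ entries of the ascending list and the identification \eqref{eq35} is immediate; the positive case \eqref{eq35plus} follows identically by taking the $j$-th largest. I do not anticipate any substantive obstacle here: the whole content lies in recognising that the hypothesis $t\not\in\spec(A)$ turns $\frak{a}^2_t$ into a positive-definite inner product on $\mathcal{L}$, after which Courant--Fischer does the rest.
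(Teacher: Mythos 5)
Your proof is correct and follows essentially the same route as the paper, which simply invokes the Min-max Principle for the finite-dimensional matrix problem associated with \eqref{eq31}; you have merely spelled out the standard reduction (positivity of $\frak{a}^2_t$ on $\mathcal{L}$ for $t\not\in\spec(A)$, the $\frak{a}^2_t$-self-adjoint Riesz representative of $\frak{a}^1_t$, Courant--Fischer) that the paper compresses into one sentence.
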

\begin{proof}
This follows immediately from the Min-max Principle applied to the matrix problem associated to \eqref{eq31}.
\end{proof}

A combination of lemmas~\ref{finite} and \ref{lem42} leads to the following generalisation of Lemma~\ref{lem31}, which is exactly the content of \cite[Theorem~2.4]{1995Zimmer}. See also \cite[Theorem~11]{2004Davies} and \cite[Corollary~7]{2013Boulton}.

\begin{lem} \label{lem43}
 Let $t\not\in \spec(A)$. Then
\begin{equation}  \label{lower}
t+\frac{1}{\tau^-_j(t)}\leqslant \lambda_{\ell(t)-j+1} \qquad \forall j=1, \ldots ,m^-(t)
\end{equation}
and
\begin{equation}  \label{upper}
t+\frac{1}{\tau^+_j(t)}\geqslant \lambda_{\ell(t)+j} \qquad \forall j=1, \ldots ,m^+(t).
\end{equation}
\end{lem}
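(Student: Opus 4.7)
The plan is to derive the statement by combining the variational characterisations of $\tau_j^\pm(t)$ in Lemma~\ref{finite} with those of $\mu_j^\pm(t)$ from equations \eqref{eq34}--\eqref{eq34plus}, whose values are identified explicitly in Lemma~\ref{lem42}. The point is that, side by side, the two families have identical min-max form in the Rayleigh quotient $\frak{a}^1_t(u,u)/\frak{a}^2_t(u,u)$; the only difference is that the outer optimisation for $\tau_j^\pm$ ranges over $j$-dimensional subspaces of $\mathcal{L}$, while that for $\mu_j^\pm$ ranges over all $j$-dimensional subspaces of $\dom(A)$.

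First, using $\mathcal{L}\subset \dom(A)$, I would observe that shrinking the family of admissible subspaces can only increase an infimum and can only decrease a supremum. Hence
\begin{equation*}
\mu_j^-(t) \leq \tau_j^-(t) \quad (j=1,\ldots,m^-(t)) \qquad \text{and} \qquad \mu_j^+(t) \geq \tau_j^+(t) \quad (j=1,\ldots,m^+(t)).
\end{equation*}
By the definition of $m^\pm(t)$, the eigenvalues on the right of each inequality are strictly negative, respectively strictly positive. Lemma~\ref{lem42} also guarantees that $\mu_j^-(t)<0$ for $j\leq \ell(t)$ (and in particular a posteriori $m^-(t)\leq \ell(t)$) and $\mu_j^+(t)>0$ for all $j$, so the reciprocals of all quantities involved are well-defined and of a known sign.

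Next, I would invert each inequality, taking care of the direction change. In the negative case, both $\mu_j^-(t)$ and $\tau_j^-(t)$ are negative, so $\mu_j^-(t)\leq \tau_j^-(t)<0$ yields $1/\tau_j^-(t)\leq 1/\mu_j^-(t)$. In the positive case, both are positive, so $0<\tau_j^+(t)\leq \mu_j^+(t)$ yields $1/\tau_j^+(t)\geq 1/\mu_j^+(t)$. Substituting the identities from Lemma~\ref{lem42}, namely $1/\mu_j^-(t)=\lambda_{\ell(t)-j+1}-t$ and $1/\mu_j^+(t)=\lambda_{\ell(t)+j}-t$, and adding $t$ to both sides, produces the two inequalities \eqref{lower} and \eqref{upper} exactly as stated.

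The only real subtlety is the bookkeeping of signs when passing the inequality through the reciprocal, together with ensuring that $\tau_j^\pm(t)\neq 0$ in the range of indices considered. Both are resolved by the very definition of $m^\pm(t)$ as the number of eigenvalues of \eqref{eq31} of each sign, so no additional argument is required.
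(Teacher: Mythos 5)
Your argument is correct and is essentially identical to the paper's own proof: both derive $\tau_j^-(t)\geq\mu_j^-(t)$ and $\tau_j^+(t)\leq\mu_j^+(t)$ by restricting the min-max characterisations from $\dom(A)$ to $\mathcal{L}$, then invert with care for signs and invoke Lemma~\ref{lem42}. Your extra remarks on sign bookkeeping and on $m^-(t)\leq\ell(t)$ simply make explicit what the paper leaves implicit (cf.\ Remark~\ref{rem_new1}).
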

\begin{proof}
From \eqref{eq34} and \eqref{eq35} we immediately get
\[
  \tau^-_j \geqslant \mu_j^- \quad \text{so} \quad \frac{1}{\mu_j^-} \geqslant \frac{1}{\tau_j^-}.
\]
Then, according to Lemma~\ref{lem42},\[\lambda_{\ell-j+1} \geqslant t+ \frac{1}{\tau^-_j}.
\]
The proof of the other statement is very similar.
\end{proof}

In view of Lemma~\ref{lem43}, the inverse residuals $\tau^\pm_j$ give lower and upper bounds for the eigenvalues of $A$ which are below and above $t$, respectively.  

\begin{rem} \label{rem_new1}
In general $\ell(t)\geq m^-(t)$,
but it is not necessarily guaranteed that $\ell(t)= m^-(t)$.  On a practical setting, \emph{a priori} information about the value of $\ell(t)$ is required, if we wish to determine the correct indexing of the lower bounds for the points in $\spec(A)$ which are below $t$. This is a known limitation of the current approach, which in particular frameworks can be handled by means of homotopy methods \cite{1991Plum,1990Plum}. 
\end{rem}

In spite of this observation, note that a positive $m^-(t)$ implies the existence of 
eigenvalues of $A$ below $t$.


\section{Optimal choice of the shift} \label{impzimesec}
The quality of the bounds established in Lemma~\ref{lem43} depends on the choice of the parameter $t$. We now examine the optimality of these bounds and show that this is achieved as $t$ moves away from the eigenvalue of interest. We begin with an auxiliary statement.

\begin{lem} \label{lem51}
Let  $s,t\not\in \spec(A)$ be such that $t<s$. Assume that $u\in \mathcal{L}$ is such that either $\frak{a}^1_t(u,u)<0$ or  $\frak{a}^1_{s}(u,u)>0$. Then
\begin{equation} \label{eq51}
 t+\frac{\frak{a}^2_t(u,u)}{\frak{a}^1_t(u,u)} \leqslant s +\frac{\frak{a}^2_{s}(u,u)}{\frak{a}^1_{s}(u,u)}.
\end{equation}
\end{lem}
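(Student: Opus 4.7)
My plan is to reduce this to a one-variable calculus inequality via an algebraic identity. I would begin by introducing the scalars $\alpha := \frak{a}^0(u,u)$, $\beta := \frak{a}^1(u,u)$ and $\gamma := \frak{a}^2(u,u)$; the sign hypothesis forces $u\neq 0$ and hence $\alpha>0$. Expanding the $t$-shifted forms gives $\frak{a}^1_t(u,u) = \beta - t\alpha$ and $\frak{a}^2_t(u,u) = \gamma - 2t\beta + t^2\alpha$, and a straightforward calculation (clearing the denominator) collapses the quantity of interest to the rational function
\[
f(t) := t + \frac{\frak{a}^2_t(u,u)}{\frak{a}^1_t(u,u)} = \frac{\gamma - t\beta}{\beta - t\alpha},
\]
valid at every $t$ with $\beta - t\alpha \neq 0$. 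So proving \eqref{eq51} amounts to establishing $f(t) \leq f(s)$.

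Next I would differentiate the last expression, obtaining
\[
f'(t) = \frac{\alpha\gamma - \beta^2}{(\beta - t\alpha)^2}.
\]
The Cauchy-Schwarz inequality $\beta^2 = |\langle Au,u \rangle|^2 \leq \|Au\|^2\,\|u\|^2 = \alpha\gamma$ now forces $f' \geq 0$ wherever it is defined. Consequently $f$ is non-decreasing on each of the two intervals $(-\infty, t_0)$ and $(t_0, +\infty)$, with $t_0 := \beta/\alpha$ the unique pole.

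The final step is to invoke the sign hypothesis in order to confine $t$ and $s$ to the same connected component. If $\frak{a}^1_t(u,u) < 0$, then $\beta - t\alpha < 0$, which forces $t > t_0$ and, since $t < s$, also $s > t_0$. Symmetrically, if $\frak{a}^1_s(u,u) > 0$, then $\beta - s\alpha > 0$ forces $s < t_0$ and hence $t < t_0$ as well. In either scenario the closed interval $[t,s]$ lies inside a single monotone interval of $f$, and \eqref{eq51} follows.

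The work is essentially routine: the algebraic collapse of $f(t)$ to a Möbius-type expression and the derivative computation require only patience, and Cauchy-Schwarz supplies the monotonicity. The one point deserving explicit attention is that the sign hypothesis is sharp, since it is precisely what prevents the pole $t_0$ from lying strictly between $t$ and $s$; without it, $f$ would jump from $+\infty$ to $-\infty$ across $[t,s]$ and the inequality could reverse.
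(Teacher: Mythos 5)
Your proof is correct, and the underlying mechanism is the same one the paper uses, namely the Cauchy--Schwarz inequality combined with the observation that the sign hypothesis confines $t$ and $s$ to the same side of the pole $t_0=\beta/\alpha$. The packaging differs: the paper normalizes $\|u\|=1$, multiplies the Cauchy--Schwarz deficit $\frak{a}^2_t(u,u)-\frak{a}^1_t(u,u)^2\geq 0$ by $s-t$, rearranges the resulting polynomial inequality, and then divides by $\frak{a}^1_t(u,u)\,\frak{a}^1_s(u,u)$, which is positive precisely because both factors share a sign. You instead collapse $t+\frak{a}^2_t(u,u)/\frak{a}^1_t(u,u)$ to the M\"obius expression $f(t)=(\gamma-t\beta)/(\beta-t\alpha)$, note that $f'(t)=(\alpha\gamma-\beta^2)/(\beta-t\alpha)^2\geq 0$ by Cauchy--Schwarz, and read off the inequality from monotonicity of $f$ on the component of $\RR\setminus\{t_0\}$ containing both $t$ and $s$. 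Your route is structurally more transparent: it isolates the conclusion as a monotonicity statement and makes it immediate why the sign hypothesis is exactly the condition needed to keep the pole out of $[t,s]$; the paper's manipulation reaches the same end without invoking differentiation and is slightly more self-contained. Both are sound, and apart from presentation they are the same proof.
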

\begin{proof}
Let $u$ be as in the hypothesis. Without loss of generality we can assume that $\|u\|=1$. 
By the Cauchy-Schwarz inequality,
\begin{align*}
 \frak{a}^1_t(u,u)^2&=\Bigl|\langle(A-t) u,u\rangle \Bigr|^2\\
&\leqslant \langle(A-t) u,(A-t)u\rangle \langle u,u\rangle\\
&= \frak{a}^2_t(u,u).
\end{align*}
Then
\[
0 \leqslant (s-t)\left(\frak{a}^2_t(u,u)-\frak{a}^1_t(u,u)^2\right).
\]
Hence
\[
   \frak{a}^1_t(u,u)\frak{a}^2_t(u,u)-(s-t)\frak{a}^2_t(u,u) \leqslant \frak{a}^1_t(u,u) \frak{a}^2_t(u,u) -(s-t) \frak{a}^1_t(u,u)^2.
\]
According to the hypothesis, either both $\frak{a}^1_t(u,u)$ and $\frak{a}^1_{s}(u,u)$ are positive or both
are negative. Thus
\[
\frac{\frak{a}^2_t(u,u)}{\frak{a}^1_t(u,u)} \leqslant \frac{\frak{a}^2_t(u,u)-(s-t) \frak{a}^1_t(u,u)}{\frak{a}^1_t(u,u)-(s-t)\frak{a}^0(u,u)}=\frac{\frak{a}^2_{s}(u,u)}{\frak{a}^1_{s}(u,u)}+s-t.
\]
\end{proof}

The following is the main statement of this section. We formulate it in terms of $t\pm R$ for a fixed value of $t$ and consider moving $R$ along $(0,\infty)$. Note that the hypothesis ensures that $\ell(t \pm R)=\ell(t)$.

\begin{thm} \label{lem52}
 Let $t \in \RR$ and $R>0$ be such that \[[t-R,t+R]\cap \spec(A)=\varnothing.\]  Then the following holds true.
\begin{enumerate}
\item \label{sta1} $m^-(t)\leqslant m^-(t+R)$ and 
\[
     t+\frac{1}{\tau^-_j(t)} \leqslant t+R +\frac{1}{\tau^-_j(t+R)}\leqslant \lambda_{\ell(t)-j+1} \qquad \forall
     j=1,\dots,m^-(t)
\]
\item \label{sta2} $m^+(t)\leqslant m^+(t-R)$ and 
\[
    \lambda_{\ell(t-R)+j}\leqslant t-R+\frac{1}{\tau^+_j(t-R)} \leqslant t +\frac{1}{\tau^+_j(t)} 
    \qquad \forall
j=1,\dots,m^+(t)
\]
\end{enumerate}
\end{thm}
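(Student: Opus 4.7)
The plan is to reduce everything to the pointwise monotonicity of Lemma~\ref{lem51}, applied on a carefully chosen $j$-dimensional test subspace of $\mathcal{L}$, and then to translate between the min--max characterisation of $\tau^-_j$ in Lemma~\ref{finite} and the expression $t+1/\tau^-_j$, using that $\tau\mapsto t+1/\tau$ is decreasing on $(-\infty,0)$. I will carry out part~\ref{sta1} in detail; part~\ref{sta2} follows by a mirror argument using the other half of Lemma~\ref{lem51} together with the identity $\frak{a}^1_{t-R}(u,u)=\frak{a}^1_t(u,u)+R\langle u,u\rangle$.

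Fix $j\leq m^-(t)$ and let $V_j\subset\mathcal{L}$ be the $j$-dimensional subspace spanned by the generalised eigenvectors of the pencil \eqref{eq31} at shift $t$ associated to $\tau^-_1(t)\leq\dots\leq\tau^-_j(t)$. On $V_j\setminus\{0\}$ the quotient $\frak{a}^1_t(u,u)/\frak{a}^2_t(u,u)$ is bounded above by $\tau^-_j(t)<0$, hence $\frak{a}^1_t(u,u)<0$, and so
\[\frak{a}^1_{t+R}(u,u)=\frak{a}^1_t(u,u)-R\|u\|^2<0\]
because $R>0$. Taking $j=m^-(t)$, this exhibits a subspace of dimension $m^-(t)$ on which the matrix pencil at shift $t+R$ is negative definite, whence the min--max in Lemma~\ref{finite} gives $m^-(t+R)\geq m^-(t)$; in particular every $\tau^-_j(t+R)$ with $j\leq m^-(t)$ is well defined and strictly negative.

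For each $u\in V_j\setminus\{0\}$ the sign condition $\frak{a}^1_t(u,u)<0$ activates Lemma~\ref{lem51} with $s=t+R$, yielding the pointwise inequality
\[t+\frac{\frak{a}^2_t(u,u)}{\frak{a}^1_t(u,u)}\leq (t+R)+\frac{\frak{a}^2_{t+R}(u,u)}{\frak{a}^1_{t+R}(u,u)}.\]
I would then take $\min_{u\in V_j\setminus\{0\}}$ on both sides. Since $\frak{a}^1_t/\frak{a}^2_t$ attains its maximum $\tau^-_j(t)$ on $V_j$, and $x\mapsto 1/x$ is decreasing on $(-\infty,0)$, the minimum of the left-hand side equals $t+1/\tau^-_j(t)$. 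The right-hand side becomes $(t+R)+1/\sigma$, where $\sigma:=\max_{u\in V_j\setminus\{0\}}\frak{a}^1_{t+R}(u,u)/\frak{a}^2_{t+R}(u,u)<0$. By the min--max characterisation in Lemma~\ref{finite}, $\tau^-_j(t+R)\leq\sigma<0$, hence $1/\sigma\leq 1/\tau^-_j(t+R)$, and chaining these inequalities gives the central inequality of part~\ref{sta1}. The rightmost inequality is Lemma~\ref{lem43} at shift $t+R$, where $\ell(t+R)=\ell(t)$ thanks to the hypothesis $[t-R,t+R]\cap\spec(A)=\varnothing$.

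No single step is genuinely hard; the only real subtlety is tracking orientation across the reciprocal map. The min--max for $\tau^-_j$ becomes a max--min once one applies $\tau\mapsto t+1/\tau$ on the negative axis, and one has to ensure that $\frak{a}^1$ stays negative on the transported subspace so that the reciprocals are well defined and the direction of every inequality is preserved. This is precisely why the argument starts from a subspace identified at shift $t$: the sign of $\frak{a}^1_t$ transports monotonically to $\frak{a}^1_{t+R}$ because $R>0$. Part~\ref{sta2} is symmetric, applied to a subspace on which $\frak{a}^1_t>0$ at shift $t$, which then stays positive at shift $t-R$.
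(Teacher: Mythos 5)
Your proposal is correct and follows essentially the same route as the paper: both pick the $j$-dimensional subspace of $\mathcal{L}$ realising the min-max for $\tau^-_j(t)$, observe that $\frak{a}^1_t$ (and hence $\frak{a}^1_{t+R}$) is negative on that subspace, invoke Lemma~\ref{lem51} there, and then pass to $\tau^-_j(t+R)$ via the min-max of Lemma~\ref{finite}. The only cosmetic difference is bookkeeping — the paper chains through two explicit extremising vectors in that subspace while you take minima over the whole subspace — and both deliver the same chain of inequalities.
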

\begin{proof}
We include the proof of the statement ``\ref{sta1}.'' only. The statement ``\ref{sta2}.'' is shown in a similar fashion.  

Let $\mathcal{V}^- \subset \mathcal{L}$ with $\x{dim}\mathcal{V}^-=j$ be such that there exists $u_j^-\in  \mathcal{V}^-$ with $\frak{a}^1_t(u^-_j,u^-_j)<0$ and
\[
\tau^-_j(t)=\min_{\substack {\mathcal{V}\subset \mathcal{L} \\ \x{dim} \mathcal{V}=j}} \; \max_{\substack {u\in \mathcal{V} \\ u \neq 0}} \; \frac{\frak{a}^1_t(u,u)}{\frak{a}^2_t(u,u)}=  \max_{\substack {u\in \mathcal{V}^-}} \;\frac{\frak{a}^1_t(u,u)}{\frak{a}^2_t(u,u)} =\frac{\frak{a}^1_t(u^-_j,u^-_j)}{\frak{a}^2_t(u^-_j,u^-_j)}.
\]
Then 
\[
 \tau^-_j(t+R)=\min_{\substack {\mathcal{V}\subset \mathcal{L} \\ \x{dim} \mathcal{V}=j}} \; \max_{\substack {u\in \mathcal{V} \\ u \neq 0}} \; \frac{\frak{a}^1_{t+R}(u,u)}{\frak{a}^2_{t+R}(u,u)}  \leqslant \max_{\substack {u\in \mathcal{V}^-}} \; \frac{\frak{a}^1_{t+R}(u,u)}{\frak{a}^2_{t+R}(u,u)}
 =\frac{\frak{a}^1_{t+R}(v^-_j,v^-_j)}{\frak{a}^2_{t+R}(v^-_j,v^-_j)}
\]
for a special (maximising)  vector $v_j^-\in \mathcal{V}^-$.  

Since $\frak{a}^1_{t}(v_j^-,v_j^-)<0$ and
\[
    \frak{a}^1_{t+R}(v_j^-,v_j^-)=\frak{a}^1_{t}(v_j^-,v_j^-)-R\|v_j^-\|^2,
\]
then also $\frak{a}^1_{t+R}(v_j^-,v_j^-)<0$. Hence $m^-(t+R)\geqslant m^-(t)$.

Now, as both sides of the inequality above are negative, we gather that
\[
       \frac{1}{\tau^-_j(t+R)}\geqslant  \frac{\frak{a}^2_{t+R}(v^-_j,v^-_j)}{\frak{a}^1_{t+R}(v^-_j,v^-_j)}.
\]
Hence, by the definition of $u_j^-$ above, the fact that the fractions involved are negative, and an application of  \eqref{eq51} with $s=t+R$ and $u=v^-_j$,  we get
\begin{align*}
 t+\frac{1}{\tau^-_j(t)} &= t+ \frac{\frak{a}^2_t(u_j^-,u_j^-)}{\frak{a}^1_t(u_j^-,u_j^-)}   
     \leqslant  t+ \frac{\frak{a}^2_t(v_j^-,v_j^-)}{\frak{a}^1_t(v_j^-,v_j^-)}   \\ &  \leqslant t+R+ 
     \frac{\frak{a}^2_{t+R}(v_j^-,v_j^-)}{\frak{a}^1_{t+R}(v_j^-,v_j^-)}
\leqslant t+R +\frac{1}{\tau^-_j(t+R)}.
\end{align*}
Note that we can do all this for $j$ running from $1$ to $m^-(t)$.
\end{proof}

Consider the specific group of $m$ eigenvalues, 
\begin{equation}    \label{group}
   \{\lambda_{1}\leq\ldots\leq\lambda_{m} \}\subset \spec(A).
\end{equation}
Suppose we find upper bounds for these eigenvalues from a fixed $t^-<\lambda_{1}$ and 
lower bounds from a fixed $\lambda_{m}<t^+<\lambda_{m+1}$. Denote these bounds by
\[
     \bar{\lambda}_{j,\mathrm{up}}(t^-)= t^-+ \frac{1}{\tau^+_{j}(t^-)}
\qquad \text{and} \qquad
     \bar{\lambda}_{j,\mathrm{low}}(t^+)= t^+ + \frac{1}{\tau^-_{\ell(t^+)-j+1}(t^+)}.
\]
If we choose $\lambda_{m+1}>s^+\geq t^+$ and $s^-\leq t^-$, denoting
\[
\x{tol}(r^-,r^+,j)=\Bigl|\bar{\lambda}_{j,\mathrm{up}}(r^-)-\bar{\lambda}_{j,\mathrm{low}}(r^+)\Bigl|
\qquad  \forall j=1,\ldots,\ell \qquad r\in\{t,s\},
\]
gives
\[
\x{tol}(s^-,s^+,j) \leqslant \x{tol}(t^-,t^+,j).
\]
Indeed, set $s^+=t^++R^+$ and $s^-=t^--R^-$ for $R^\pm \geq 0$. 
According to Theorem~\ref{lem52}, 
\[
\bar{\lambda}_{j,\mathrm{low}}(t^+) \leqslant \bar{\lambda}_{j,\mathrm{low}}(s^+) < \lambda_j 
\qquad
\text{and}
\qquad
\lambda_j <\bar{\lambda}_{j,\mathrm{up}}(s^-) \leqslant \bar{\lambda}_{j,\mathrm{up}}(t^-).
\]
Then
\[
 |\bar{\lambda}_{j,\mathrm{up}}(s^-)-\bar{\lambda}_{j,\mathrm{low}}(s^+)| \leq |\bar{\lambda}_{j,\mathrm{up}}(t^-)-\bar{\lambda}_{j,\mathrm{low}}(t^+)|.
\]


\section{Convergence} \label{convergencezimmermann}

We now examine the convergence of the bounds established in Lemma~\ref{lem43}, in a regime where  $\mathcal{L}$
captures the eigenvectors of $A$. Our aim will be to show that, under suitable conditions,
\begin{equation}    \label{bounds}
      t+\frac{1}{\tau^-_j(t)} \uparrow \lambda_{\ell(t)-j+1}
      \qquad \text{and} \qquad
      t+\frac{1}{\tau^+_j(t)} \downarrow \lambda_{\ell(t)+j},
\end{equation}
for any $j=1,\ldots,m^{\pm}(t)$.

\subsection{Auxiliary results}
We firstly set a notation that simplifies greatly the arguments below. Let 
\begin{equation} \label{eq40}
\frak{b}^-_t(u,v)=\frak{a}^1_t(u,v)+(1-\mu^-_1(t))\frak{a}^2_t(u,v),
\end{equation}
and
\begin{equation}\label{eq40plus}
\frak{b}^+_t(u,v)=-\frak{a}^1_t(u,v)+(1+\mu^+_1(t))\frak{a}^2_t(u,v),
\end{equation}
(recall \eqref{eq34} and \eqref{eq34plus}). The quadratic forms \[\frak{b}^{\pm}_t: \dom(A)\times \dom(A)\longrightarrow \mathbb{C}\] are positive and closed. Moreover
\[
\frak{a}^2_t(u,u) \leqslant \frak{b}^{\pm}_t(u,u)  \qquad \forall u \in \dom(A).
\]
We write $^\pm\llbracket u\rrbracket_t=\frak{b}^{\pm}_t(u,u)^{1/2}$, so that $\left(\dom(A),\frak{b}^{\pm}_t \right)$ are a Hilbert spaces with respect to this norm. 

Let
\[
\nu_j^-
\equiv\nu^-_j(t)=\min_{\substack {\mathcal{V}\subset \dom(A) \\ \x{dim} \mathcal{V}=j}} \;\ \max_{u\in \mathcal{V}} \frac{\frak{b}_t^-(u,u)}{\frak{a}^2_t(u,u)}= \mu_j(t)^- +1 - \mu^-_1(t)>0
\]
and
\[
\nu_j^+
\equiv\nu^+_j(t)=\min_{\substack {\mathcal{V}\subset \dom(A) \\ \x{dim} \mathcal{V}=j}} \;\ \max_{u\in \mathcal{V}} \frac{\frak{b}_t^+(u,u)}{\frak{a}^2_t(u,u)}=- \mu_j(t)^+ +1 + \mu^+_1(t)>0.
\]
Let
\[
\varrho_j^-\equiv \varrho_j^-(t)=\min_{\substack {\mathcal{V}\subset \mathcal{L}  \\ \x{dim} \mathcal{V}=j}} \; \max_{u\in \mathcal{V}} \frac{\frak{b}^-_t(u,u)}{\frak{a}^2_t(u,u)}= \tau_j^-(t) +1 - \mu^-_1(t)>0
\]
and
\[
\varrho_j^+\equiv \varrho_j^+(t)=\min_{\substack {\mathcal{V}\subset \mathcal{L}  \\ \x{dim} \mathcal{V}=j}} \; \max_{u\in \mathcal{V}} \frac{\frak{b}^+_t(u,u)}{\frak{a}^2_t(u,u)}= -\tau_j^+(t) +1 + \mu^+_1(t)>0.
\]
As $|\tau_j^{\pm}-\mu^{\pm}_j|=|\varrho_j^{\pm}-\nu^{\pm}_j|$, in order to examine the limits
\eqref{bounds}, we might as well focus on studying the convergence $\varrho_j^{\pm} \downarrow \nu_j^{\pm}$. Our analysis below follows closely that of the classical setting \cite[\S4.4 and Ch.6]{1973Strang}. 

Let $P^{\pm}:\dom(A) \rightarrow \mathcal{L}$ be the projections orthogonal in the inner products $\frak{b}^{\pm}_t(u,v)$ respectively.  Let $\{\phi_k\}_{k=1}^\infty$ be a fixed family of eigenvectors such that $A\phi_k=\lambda_k\phi_k$. We assume that the $\phi_k$ are chosen to be orthonormal in the inner product $\frak{a}^2_t(\cdot,\cdot)$ of $\dom(A)$. 

Let
\[ 
    E_j= \x{Span} \{\phi_k\}_{k=1}^{j}
\]
be the eigenspace associated to the eigenvalues up to index $j$. Set
\[
 \qquad F_j=\{ u \in E_j : |u|_t=1\}
\]
and
\begin{equation} \label{eq41}
   \sigma_j^{\pm}\equiv \sigma_j^{\pm}(t)=\max_{u \in F_j} |2\frak{a}_t^2(u,u-P^{\pm}u)-\frak{a}_t^2(u-P^{\pm}u,u-P^{\pm}u)| .
\end{equation}
The following lemma mimics \cite[Lemma~6.1]{1973Strang}.

\begin{lem} \label{lem45}
If $\sigma_j^{\pm}(t) < 1$, then $\varrho_j^{\pm}(t) \leqslant \frac{\nu_j^{\pm}(t)}{1-\sigma^{\pm}_j(t)}$.
\end{lem}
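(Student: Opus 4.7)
The plan is to adapt the classical Strang-type argument (cf.\ \cite[Lemma~6.1]{1973Strang}) inside the Hilbert space $(\dom(A), \frak{b}^{\pm}_t)$, using $P^\pm E_j \subset \mathcal{L}$ as the trial subspace in the min-max characterisation of $\varrho_j^{\pm}(t)$. Three ingredients are needed: injectivity of $P^{\pm}$ on $E_j$ so that the test space is $j$-dimensional, a lower bound for $\frak{a}^2_t(P^\pm u, P^\pm u)$, and an upper bound for $\frak{b}^\pm_t(P^\pm u, P^\pm u)$, both uniform for $u$ in the unit sphere $F_j$ of $E_j$.

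I would first dispatch the denominator bound, which also yields injectivity. For $u\in F_j$, writing $P^\pm u = u - (u-P^\pm u)$ and using $|u|_t = 1$ gives
\[
\frak{a}^2_t(P^\pm u, P^\pm u) = 1 - \bigl(2\Re \frak{a}^2_t(u,u-P^\pm u) - \frak{a}^2_t(u-P^\pm u, u - P^\pm u) \bigr).
\]
The bracketed quantity is real and, by \eqref{eq41}, its modulus is bounded by $\sigma_j^{\pm}(t)$. Under the hypothesis this yields
\[
\frak{a}^2_t(P^\pm u, P^\pm u) \geq 1 - \sigma_j^{\pm}(t) > 0,
\]
which simultaneously forces $P^\pm$ to be injective on $E_j$ and delivers the desired denominator control.

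For the numerator bound I would invoke the Pythagorean identity in the $\frak{b}^\pm_t$ inner product: since $P^\pm$ is by construction the $\frak{b}^\pm_t$-orthogonal projection onto $\mathcal{L}$, one has $\frak{b}^\pm_t(u-P^\pm u, P^\pm u) = 0$ and hence $\frak{b}^\pm_t(P^\pm u, P^\pm u) \leq \frak{b}^\pm_t(u,u)$. Taking $E_j$ to be a $j$-dimensional invariant subspace achieving the outer minimum in the definition of $\nu_j^{\pm}(t)$, for every $u \in F_j$ we have $\frak{b}^\pm_t(u,u) \leq \nu_j^{\pm}(t) \frak{a}^2_t(u,u) = \nu_j^{\pm}(t)$. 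Inserting the two estimates in the min-max formula for $\varrho_j^\pm(t)$ with trial space $P^\pm E_j$ and taking the maximum over $F_j$ (which suffices by homogeneity) gives the advertised inequality.

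The main delicate point, rather than a genuine obstacle, is the bookkeeping around $E_j$: the eigenvectors of $A$ need to be indexed so that the first $j$ basis vectors $\phi_k$ correspond to the $j$ smallest values of the ratio $\frak{b}^\pm_t(\phi_k,\phi_k)/\frak{a}^2_t(\phi_k,\phi_k)$, i.e.\ to the $j$ eigenvalues of $A$ closest to $t$ from the appropriate side. With this reindexing the $\phi_k$ diagonalise $\frak{a}^2_t$ and $\frak{b}^\pm_t$ simultaneously, so $E_j$ realises the min-max for $\nu_j^\pm(t)$ and the numerator estimate above is immediate.
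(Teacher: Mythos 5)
Your proof follows essentially the same argument as the paper's: use $P^{\pm}E_j$ as the trial subspace in the min-max for $\varrho_j^{\pm}$, bound the numerator via the contractivity of the $\frak{b}_t^{\pm}$-orthogonal projection, and bound the denominator via the expansion of $|P^{\pm}u|_t^2$ controlled by $\sigma_j^{\pm}$. The only stylistic difference is that you obtain injectivity of $P^{\pm}$ on $E_j$ directly from the denominator estimate (the paper runs a separate contrapositive), and you state explicitly the indexing convention on $\{\phi_k\}$ needed for $\max_{u\in F_j}\,^{\pm}\llbracket u\rrbracket_t^2 = \nu_j^{\pm}$, which the paper leaves implicit.
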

\begin{proof}
Suppose $P^{\pm}\phi=0$ for some $\phi \in E_j$ such that $|\phi|_t=1$.
Then 
\[
\frak{a}^2_t(\phi,\phi-P^{\pm}\phi)=\frak{a}^2_t(\phi,\phi)=1 \quad \text{and} \quad \frak{a}^2_t(\phi-P^{\pm}\phi,\phi-P^{\pm}\phi)=1.
\]
If we substitute into \eqref{eq41}, we get
\[ 
\left|2\frak{a}^2_t(\phi,\phi-P^{\pm}\phi)-\frak{a}^2_t(\phi-P^{\pm}\phi,\phi-P^{\pm}\phi)\right|=1.
\]
Hence
\[
\sigma^{\pm}_j=\max_{u \in F_j} \left|2\frak{a}^2_t(u,u-P^{\pm}u)-\frak{a}^2_t(u-P^{\pm}u,u-P^{\pm}u)\right|\geqslant 1.
\]

The above shows that necessarily $P^{\pm}\phi\not=0$ for any $\phi \in F_j$, if the hypothesis is to be satisfied. In turns this implies that $ \x{dim} P^{\pm} E_j=j$. Then,
\[
\varrho_j^{\pm} \leqslant \max_{v \in P^{\pm} E_j} \frac{^\pm\llbracket v \rrbracket_t^2}{|v|_t^2}=\max_{u \in F_j} \frac{^\pm\llbracket P^{\pm}u \rrbracket_t^2}{|P^{\pm}u|_t^2}.
\]
Now since $P^{\pm}$ is the orthonormal projection in the inner product $\frak{b}_t^{\pm}$, then $^\pm\llbracket P^{\pm}u \rrbracket_t \; \leqslant \; ^\pm\llbracket u\rrbracket_t$.  Also
\[
|P^{\pm}u|_t^2=\frak{a}^2_t(u,u)-2\frak{a}^2_t(u,u-P^{\pm}u)+\frak{a}^2_t(u-P^{\pm}u,u-P^{\pm}u) \geqslant 1-\sigma_j^{\pm}.
\]
So
\[
\varrho_j^{\pm} \leqslant \max_{u\in F_j}\frac{^\pm\llbracket u\rrbracket_t^2}{1-\sigma^{\pm}_j}=\frac{\nu^{\pm}_j}{1-\sigma^{\pm}_j}.
\]
\end{proof}

By virtue of the previous lemma and the fact that 
\[
 0<\nu_j^{\pm}\leq \varrho_j^{\pm},
\]
if $\sigma_j^{\pm}$ is close to 0, then $\varrho_j^{\pm}$ is close to $\nu_j^{\pm}$. In the next two lemmas we set conditions for $\sigma_j^{\pm}$ to be small.

\begin{lem} \label{lem46}
 Let $u \in F_j$. Then
\[
|2\frak{a}^2_t(u,u-P^{\pm}u)|\leqslant 2 {\displaystyle \sum_{k=1}^j} {^\pm\!\left\llbracket \phi_k-P^{\pm}\phi_k\right\rrbracket ^2_t}.
\]
\end{lem}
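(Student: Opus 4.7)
The plan is to reduce the claim to a Cauchy--Schwarz estimate in the $\frak{b}^\pm_t$-inner product, exploiting two observations: (i) every eigenvector $\phi_k$ of $A$ satisfies the pointwise identity
\[
\frak{b}^\pm_t(\phi_k, w) = \nu_k^\pm \, \frak{a}^2_t(\phi_k, w) \qquad \forall w \in \dom(A),
\]
and (ii) the residual $u - P^\pm u$ is $\frak{b}^\pm_t$-orthogonal to $\mathcal{L}$ by construction of $P^\pm$. To verify (i) I would use $A\phi_k = \lambda_k \phi_k$ to factorise $\frak{a}^1_t(\phi_k, w) = (\lambda_k - t)\langle \phi_k, w\rangle$ and $\frak{a}^2_t(\phi_k, w) = (\lambda_k - t)^2 \langle \phi_k, w\rangle$, expand the definition of $\frak{b}^\pm_t$ in \eqref{eq40}--\eqref{eq40plus}, and collect the common scalar to recover exactly the constant $\nu_k^\pm$. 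A useful by-product is $\nu_k^\pm \geq 1$ for all relevant $k$, because $\mu_1^\pm$ is the extremal value of the sequence $\{\mu_k^\pm\}$.

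Writing $u = \sum_{k=1}^j c_k \phi_k$ with $\sum_k |c_k|^2 = 1$ and $\psi_k = \phi_k - P^\pm \phi_k$, the observations (i) and (ii) combine to give
\[
\frak{a}^2_t(u, u - P^\pm u) = \sum_k \tfrac{c_k}{\nu_k^\pm}\, \frak{b}^\pm_t(\phi_k, u - P^\pm u) = \frak{b}^\pm_t\!\left(\sum_k \tfrac{c_k}{\nu_k^\pm} \psi_k, \; u - P^\pm u\right),
\]
where the second equality uses $\frak{b}^\pm_t(P^\pm \phi_k, u - P^\pm u) = 0$. By linearity of $P^\pm$, I also expand $u - P^\pm u = \sum_l c_l \psi_l$. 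Cauchy--Schwarz in $\frak{b}^\pm_t$ then bounds the absolute value of the right-hand side by the product of two factors, each of the form ${^\pm\llbracket \sum_k d_k \psi_k \rrbracket_t}$ with $\sum_k |d_k|^2 \leq 1$; for the factor coming from $\tilde u$, this last inequality is precisely where $\nu_k^\pm \geq 1$ is used. Triangle inequality followed by Cauchy--Schwarz on each such sum gives ${^\pm\llbracket \sum_k d_k \psi_k \rrbracket_t^2} \leq \sum_k {^\pm\llbracket \psi_k \rrbracket_t^2}$, so each factor is bounded by $\sqrt{\sum_k {^\pm\llbracket \psi_k \rrbracket_t^2}}$, and multiplying yields the claim.

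The main obstacle is spotting the identity in (i). A direct Cauchy--Schwarz of $|\frak{a}^2_t(u, u - P^\pm u)|$ using only $|u|_t = 1$ and $\frak{a}^2_t \leq \frak{b}^\pm_t$ delivers the weaker estimate $\sqrt{\sum_k {^\pm\llbracket \psi_k \rrbracket_t^2}}$, which is strictly larger than the target whenever $\sum_k {^\pm\llbracket \psi_k \rrbracket_t^2} < 1$, that is, precisely in the regime where the claim is delicate. Identity (i) is what converts one $\frak{a}^2_t$-factor into a $\frak{b}^\pm_t$-factor (at the cost of a harmless division by $\nu_k^\pm \geq 1$), thereby allowing the $\frak{b}^\pm_t$-orthogonality of $u - P^\pm u$ to $\mathcal{L}$ to be applied to both sides of the bilinear form and producing the genuinely quadratic-in-error bound.
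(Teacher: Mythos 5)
Your proposal is correct and matches the paper's own argument step for step: you isolate the eigenvector identity converting $\frak{a}^2_t(\phi_k,\cdot)$ into a $\frak{b}^\pm_t$-factor, exploit the $\frak{b}^\pm_t$-orthogonality of $u-P^\pm u$ to replace $\phi_k$ by $\phi_k-P^\pm\phi_k$, apply Cauchy--Schwarz in $\frak{b}^\pm_t$ and then triangle inequality plus Cauchy--Schwarz on the coefficient sums, and absorb the extra factor using the lower bound $\geq 1$ on the spectral constant. The only (shared, harmless) imprecision is that the constant in your identity (i) is $\nu^\pm$ at a reindexed position rather than literally $\nu^\pm_k$, but the paper writes the same thing and both arguments use only that these constants are at least $1$, which holds for every index.
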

\begin{proof} We only consider the proof with the ``$-$'' sign, the proof with the ``$+$'' sign being analogous.

 For $v\in \dom(A)$  and  $k \leqslant j$,
\begin{align*}
 \frak{b}_t^-\left(\phi_k-P^- \phi_k,v-P^-v\right)&=\frak{b}_t^-\left(\phi_k,v-P^-v\right)\\
&=\frak{a}^1_t\left(\phi_k,v-P^-v\right)+\left(1-\mu^-_1\right)\frak{a}^2_t\left(\phi_k,v-P^-v\right)\\
&=\left(\frac{1}{\lambda_{k}-t}+1-\mu^-_1\right) \frak{a}^2_t\left(\phi_k,v-P^-v \right).
\end{align*}
Then
\[ 
\frak{a}^2_t(\phi_k,v-P^-v)=(1+\mu^-_k-\mu^-_1)^{-1}\frak{b}_t^-(\phi_k-P^-\phi_k,v-P^-v).
\]
So, if we expand $u={\displaystyle \sum_{k=1}^j}c_k\phi_k$ where ${\displaystyle \sum_{k=1}^j}|c_k|^2=1$ (recall that we are assuming $|\phi_k|_t=1$),
then
\begin{align*}
 2\Bigl|\frak{a}^2_t(u,&u-P^-u)\Bigr|  \\
  &= 2\Bigl|\frak{b}_t^-\Bigl({\displaystyle\sum_{k=1}^j} c_k(1+\mu^-_k-\mu^-_1)^{-1}(\phi_k-P^-\phi_k),{\displaystyle\sum_{k=1}^j} c_k(\phi_k-P^-\phi_k)\Bigr)\Bigr|\\
&\leqslant 2 \leftidx{^-}{\left\llbracket {\displaystyle\sum_{k=1}^j}c_k(1+\mu^-_k-\mu^-_1)^{-1}(\phi_k-P^-\phi_k)\right\rrbracket}{_t}  \; \leftidx{^-}{\left\llbracket {\displaystyle\sum_{k=1}^j} c_k(\phi_k-P^-\phi_k)\right\rrbracket}{_t}\\
&\leqslant 2\Biggl({\displaystyle\sum_{k=1}^j}|c_k|\left(|1+\mu^-_k-\mu^-_1|^{-1}\right)\leftidx{^-}{\llbracket \phi_k-P^-\phi_k\rrbracket}{_t}\Biggr)\Biggl({\displaystyle\sum_{k=1}^j}|c_k| \; \leftidx{^-}{\llbracket \phi_k-P^-\phi_k\rrbracket}{_t}\Biggr)\\
&\leqslant 2 \Biggl({\displaystyle\sum_{k=1}^j}|c_k|^{2}(1+\mu^-_k-\mu^-_1)^{-2}\Biggr)^{1/2}\Biggl({\displaystyle\sum_{k=1}^j}|c_k|^2\Biggr)^{1/2}\Biggl({\displaystyle\sum^j_{k=1}} \leftidx{^-}{\llbracket \phi_k-P^-\phi_k\rrbracket}{^2_t}\biggr)\\
&\leqslant 2{\displaystyle\sum_{k=1}^j} \leftidx{^-}{\llbracket \phi_k-P^-\phi_k\rrbracket}{^2_t}.
\end{align*}
The second and fourth inequalities are the Cauchy-Schwarz inequality, the third inequality is the triangle inequality. Here we are using the fact that $1+\mu^-_k-\mu^-_1>1$, so that
\[
 \frac{1}{(1+\mu^-_k-\mu^-_1)^{2}}<1.
\]
\end{proof}

\begin{lem} \label{lem47}
 Let $u \in F_j$. Then
\[
\frak{a}^2_t(u-P^{\pm}u,u-P^{\pm}u)\leqslant {\displaystyle\sum_{k=1}^j}\leftidx{^\pm}{\llbracket \phi_k-P^{\pm}\phi_k \rrbracket}{^2_t}. 
\]
\end{lem}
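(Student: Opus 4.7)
The plan is to follow the same pattern as in Lemma~\ref{lem46}: expand a generic $u\in F_j$ in the $\mathfrak{a}^2_t$-orthonormal basis $\{\phi_k\}_{k=1}^j$ of $E_j$, use linearity of the projection $P^{\pm}$ on the difference $u-P^{\pm}u$, and then control the resulting bilinear sum by two applications of Cauchy--Schwarz. The whole argument is done in parallel for the two signs.

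More concretely, write $u=\sum_{k=1}^j c_k \phi_k$ with $\sum_{k=1}^j |c_k|^2=1$ (using $|\phi_k|_t=1$ together with $\mathfrak{a}^2_t$-orthonormality and $|u|_t=1$). Since $P^{\pm}$ is linear, we get
\[
u-P^{\pm}u=\sum_{k=1}^j c_k\bigl(\phi_k-P^{\pm}\phi_k\bigr),
\]
so that
\[
\mathfrak{a}^2_t\bigl(u-P^{\pm}u,u-P^{\pm}u\bigr)=\sum_{k,l=1}^j c_k\overline{c_l}\,\mathfrak{a}^2_t\bigl(\phi_k-P^{\pm}\phi_k,\phi_l-P^{\pm}\phi_l\bigr).
\]
The first Cauchy--Schwarz inequality (in the inner product $\mathfrak{a}^2_t$) bounds each term by $|\phi_k-P^{\pm}\phi_k|_t\,|\phi_l-P^{\pm}\phi_l|_t$. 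A second Cauchy--Schwarz applied to the resulting product sum, together with $\sum_k |c_k|^2=1$, yields
\[
\mathfrak{a}^2_t\bigl(u-P^{\pm}u,u-P^{\pm}u\bigr)\leqslant \sum_{k=1}^j |\phi_k-P^{\pm}\phi_k|^2_t.
\]

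To finish, I invoke the domination $\mathfrak{a}^2_t(w,w)\leqslant \mathfrak{b}^{\pm}_t(w,w)$ (noted right after \eqref{eq40plus}) with $w=\phi_k-P^{\pm}\phi_k$, i.e. $|\phi_k-P^{\pm}\phi_k|^2_t\leqslant {}^{\pm}\llbracket \phi_k-P^{\pm}\phi_k\rrbracket^2_t$, giving exactly the claimed estimate. There is really no obstacle of substance here; the only point worth being careful about is that neither the linearity of $P^{\pm}$ nor the Cauchy--Schwarz steps require $\phi_k\in\mathcal{L}$ or $P^{\pm}\phi_k\in E_j$, so the decomposition is valid for any orthonormal basis of $E_j$, and the $+$ and $-$ cases proceed identically since the only structural fact used about $\mathfrak{b}^{\pm}_t$ is that it dominates $\mathfrak{a}^2_t$.
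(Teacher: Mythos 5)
Your proof is correct and follows essentially the same route as the paper's: expand $u=\sum_k c_k\phi_k$ with $\sum_k|c_k|^2=1$, use linearity of $P^{\pm}$, apply Cauchy--Schwarz, and invoke the domination $\mathfrak{a}^2_t\leqslant\mathfrak{b}^{\pm}_t$ once. The only cosmetic difference is that the paper passes to the $\mathfrak{b}^{\pm}_t$-norm at the outset and uses the triangle inequality there, whereas you keep the estimate in $\mathfrak{a}^2_t$ until the last step and then dominate term by term; both give the stated bound.
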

\begin{proof}
Expand $u={\displaystyle \sum_{k=1}^j}c_k\phi_k$ for ${\displaystyle \sum_{k=1}^j}|c_k|^2=1$.
Then, 
\begin{align*}
 \frak{a}^2_t(u-P^{\pm}u,u-P^{\pm}u) &\leqslant \frak{b}_t^\pm(u-P^{\pm}u,u-P^{\pm}u)=\leftidx{^\pm}{\llbracket u-P^{\pm}u\rrbracket}{^2_t}\\
&\leqslant \Biggl({\displaystyle \sum_{k=1}^j}|c_k| \; ^\pm\llbracket \phi_k-P^{\pm}\phi_k\rrbracket_t\Biggr)^2 \\
& \leqslant \Biggl({\displaystyle \sum_{k=1}^j}|c_k|^2\Biggr)\Biggl({\displaystyle \sum_{k=1}^j}\leftidx{^\pm}{\llbracket \phi_k-P^{\pm}\phi_k\rrbracket}{^2_t}\Biggr)\\
&\leqslant {\displaystyle\sum_{k=1}^j}\leftidx{^\pm}{\llbracket \phi_k-P^{\pm}\phi_k\rrbracket}{^2_t}.
\end{align*}
\end{proof}

According to lemmas~\ref{lem46} and \ref{lem47}, it follows that 
\begin{equation*}
 \sigma_j^{\pm}(t) \leqslant 3 {\displaystyle\sum_{k=1}^j} \leftidx{^\pm}{\llbracket \phi_k-P^{\pm}\phi_k\rrbracket}{^2_t}.
\end{equation*}
The next statement is the main result of this section. Similar results in the context of the quadratic method have been studied in \cite[Section 4]{2013Boulton}.

\subsection{Main statement on convergence}
\begin{thm} \label{pro63}
Fix a group of $m$ eigenvalues as in \eqref{group} and let \[t\in (-\infty,\lambda_{m+1})\setminus \spec A.\] There exist a constant $C_t>0$ only dependant on
$A$, $t$ and $m$, ensuring the validity of the following. If $\mathcal{L}\subset \dom(A)$ is such that, 
for any $k=1,\ldots,m$ there exist $v_k\in \mathcal{L}$ such that
\[
      ^\pm\llbracket \phi_k - v_k \rrbracket_t \leq \varepsilon_k
\]
where
\[
       \left(\sum_{k=1}^m \varepsilon_k^2\right)^{1/2} \leq \varepsilon <\frac{1}{\sqrt{6}},
\]
then
\begin{equation}   \label{convzmminus}
         0<\lambda_{\ell(t) -k+1} -\left( t+\frac{1}{\tau_k^-(t)}  \right) \leq \frac{C_t}{|\tau_k^-(t)|} \varepsilon^2
\qquad \forall k=1,\ldots,m^-(t)
\end{equation}
and
\begin{equation} \label{convzmplus}
         \begin{aligned} 0< \left( t+\frac{1}{\tau_k^+(t)}\right) - \lambda_{\ell(t) +k}  &\leq \frac{C_t}{|\tau_k^+(t)|} \varepsilon^2
\\ & \forall k=1,\ldots,\min\{m^+(t),m-\ell(t)\}.
\end{aligned}
\end{equation}
\end{thm}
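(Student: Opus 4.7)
The approach will be the standard Galerkin error-control strategy: first bound the projection error, then use it to control $\sigma_j^\pm$, then invoke Lemma~\ref{lem45} to pass from $\sigma_j^\pm$ to convergence $\varrho_j^\pm \to \nu_j^\pm$, and finally reciprocate to deduce the rates in \eqref{convzmminus}--\eqref{convzmplus}.

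The first step is to pass from the given approximations $v_k$ to the actual projections $P^\pm\phi_k$. Since $P^\pm$ is the orthogonal projection onto $\mathcal{L}$ in the $\frak{b}_t^\pm$-inner product, its minimising property yields
\[
\leftidx{^\pm}{\llbracket \phi_k - P^\pm\phi_k \rrbracket}{_t} \leq \leftidx{^\pm}{\llbracket \phi_k - v_k \rrbracket}{_t} \leq \varepsilon_k
\]
for every $k=1,\ldots,m$. Plugging this into the combination of Lemmas~\ref{lem46} and \ref{lem47} recorded at the end of the previous subsection, and summing the squares via the hypothesis, produces $\sigma_j^\pm \leq 3\varepsilon^2 < 1/2$. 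Lemma~\ref{lem45} then furnishes
\[
0 \leq \varrho_j^\pm - \nu_j^\pm \leq \frac{\nu_j^\pm \sigma_j^\pm}{1-\sigma_j^\pm} \leq 6\nu_j^\pm \varepsilon^2,
\]
and the identity $|\tau_j^\pm - \mu_j^\pm| = |\varrho_j^\pm - \nu_j^\pm|$ transfers the same estimate to $|\tau_j^\pm - \mu_j^\pm|$.

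The final step is reciprocation. Using $\lambda_{\ell(t)-k+1}=t+1/\mu_k^-(t)$ from Lemma~\ref{lem42} one writes
\[
\lambda_{\ell(t)-k+1}-\left(t+\frac{1}{\tau_k^-(t)}\right)=\frac{\tau_k^-(t)-\mu_k^-(t)}{\mu_k^-(t)\,\tau_k^-(t)}.
\]
The min-max on nested spaces provides $\tau_k^- \geq \mu_k^-$ with both negative, so the numerator is non-negative, the denominator equals $|\mu_k^-| |\tau_k^-|$, and the bound from the previous paragraph gives an upper estimate of $(6\nu_k^-/|\mu_k^-|)\,\varepsilon^2/|\tau_k^-(t)|$. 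The ``$+$'' case proceeds analogously. Setting
\[
C_t := 6 \max\left\{\max_{1\leq k\leq \ell(t)}\frac{\nu_k^-(t)}{|\mu_k^-(t)|},\; \max_{1\leq k\leq m-\ell(t)}\frac{\nu_k^+(t)}{|\mu_k^+(t)|}\right\},
\]
which depends only on $A$, $t$ and $m$, absorbs everything into the stated form.

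The main delicacy is the bookkeeping between two distinct labellings: the $\phi_k$ are ordered by the natural enumeration of $\spec(A)$, whereas the $\mu_k^\pm(t)$ count the eigenvalues of $(A-t)^{-1}$ outward from zero. The $k$-th ``$-$''-bound targets $\lambda_{\ell(t)-k+1}$ and the eigenvectors relevant for the $\sigma_j^-$ estimate of Step~2 are $\phi_{\ell(t)-k+1},\ldots,\phi_{\ell(t)}$, whereas the $k$-th ``$+$''-bound targets $\lambda_{\ell(t)+k}$ and the relevant eigenvectors are $\phi_{\ell(t)+1},\ldots,\phi_{\ell(t)+k}$. Both collections lie inside $\{\phi_1,\ldots,\phi_m\}$ precisely under the index restrictions of the conclusion; the ``$+$'' restriction $k\leq m-\ell(t)$ is the binding one, since only the approximations $v_1,\ldots,v_m$ are available.
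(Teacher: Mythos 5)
Your proposal follows the same route as the paper: derive $\sigma_j^\pm\le 3\varepsilon^2$ from Lemmas~\ref{lem46}--\ref{lem47}, apply Lemma~\ref{lem45} to bound $\varrho_j^\pm-\nu_j^\pm$, and reciprocate via the identity $\lambda_{\ell-k+1}-(t+1/\tau_k^-)=(\tau_k^--\mu_k^-)/(\mu_k^-\tau_k^-)$. Your write-up is in fact slightly more careful than the paper's in two places that it leaves implicit: the best-approximation step $\leftidx{^\pm}{\llbracket\phi_k-P^\pm\phi_k\rrbracket}{_t}\le\leftidx{^\pm}{\llbracket\phi_k-v_k\rrbracket}{_t}$ needed to pass from the hypothesis on $v_k$ to the projections appearing in $\sigma_j^\pm$, and the bookkeeping that for the ``$-$'' (resp.\ ``$+$'') branch the spaces $E_j$, $F_j$ must be built from $\phi_{\ell(t)-j+1},\dots,\phi_{\ell(t)}$ (resp.\ $\phi_{\ell(t)+1},\dots,\phi_{\ell(t)+j}$), which is exactly what forces the restriction $k\le m-\ell(t)$ in \eqref{convzmplus}.
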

\begin{proof}
Assume that $\mathcal{L}$ is as in the hypothesis of the theorem. Then, Lemma~\ref{lem46} and Lemma~\ref{lem47} imply
\begin{equation} \label{eq42}
 \sigma_k^{\pm} \leqslant 3 \varepsilon^2 <\frac12.
\end{equation}
By  Lemma~\ref{lem45}, \eqref{eq42} implies,
\[
\nu_k^{\pm} \leqslant \varrho_k^{\pm} \leqslant \frac{\nu_k^{\pm}}{1-\varepsilon^2} .
\]
Thus 
\begin{align*}
 \varrho_k^{\pm} -\nu_k^{\pm} &\leqslant \frac{\nu_k^{\pm}}{1-\varepsilon^2}-\nu_k^{\pm} =\frac{\varepsilon^2}{1-\varepsilon^2} \; \nu_k^{\pm} \leqslant \frac{6}{5} \nu_k^{\pm} \varepsilon^2 .
\end{align*}

Let us show the existence of $C_t>0$ ensuring \eqref{convzmminus}. That we can find a (perhaps larger) $C_t>0$ ensuring both 
\eqref{convzmminus} and \eqref{convzmplus}, follows by means of the same technique applied to the ``+'' case.   Firstly note that
\begin{align*}
 \tau^-_k-\mu^-_k&=\varrho_k^--\nu_k^- \leqslant \frac65 \nu_k^-\varepsilon^2\\
 &\leq \frac65(\mu_k^-+1- \mu_1^-) \varepsilon^2.
\end{align*}
This yields
\[
\tau^-_k- \frac{1}{\lambda_{\ell-k+1}-t} \leqslant  \frac{6}{5} \left(\frac{1}{\lambda_{\ell-k+1}-t}-\frac{1}{\lambda_{\ell}-t}+1\right)\varepsilon^2.
\]
Since
\[
\lambda_{\ell-k+1}-\Bigl(t+\frac{1}{\tau^-_k}\Bigr)= \frac{\lambda_{\ell-k+1}-t}{\tau^-_k}\; \Bigl(\tau^-_k-\frac{1}{\lambda_{\ell-k+1}-t}\Bigr),
\]
then
\[
\lambda_{\ell-k+1}-\Bigl(t+\frac{1}{\tau^-_k}\Bigr) \leqslant \frac{C_t}{|\tau_k^-|}  \varepsilon^2,
\]
for a chosen constant that allows
\[
 C_t>\frac{6}{5}  \Biggl|1-\frac{\lambda_{\ell-k+1}-t}{\lambda_{\ell}-t}+\Bigl(\lambda_{\ell-k+1}-t\Bigr)\Biggr|
\]
for all $k=1,\ldots,m^-(t)$. This choice ensures \eqref{convzmminus}.

The arguments leading to the conclusion \eqref{convzmplus} are very similar.
\end{proof}

In the remaining parts of this paper we explore an implementation of the method introduced in Section~\ref{zimesec} to one-dimensional Schr\"odinger hamiltonians under suitable conditions on the potential. These operators satisfy the hypothesis of being bounded below  with a compact resolvent. 
The trial spaces will be taken to be generated by the finite element method. We  examine the convergence properties of this implementation, in the contexts of Theorem~\ref{pro63}.


\section{Eigenvalue bounds for Schr{\"o}dinger operators in one dimension} \label{zm_numerical}
Let $A=H$ be a one-dimensional semi-definite Schr{\"o}dinger operator with a compact resolvent. Let the trial subspaces $\mathcal{L}$ be constructed via the finite element method on a large, but finite, segment. Under standard assumptions on  the finite element spaces, below we determine the precise rate at which the upper and lower bounds in Lemma~\ref{lem43} converge to the true eigenvalues of $H$, as the mesh refines and the length of the segment grows. We refer to
\cite[Section~4]{20133Boulton} for similar results in the context of the so-called quadratic method.

\subsection{The model hamiltonian and its truncation to a finite box}
Let
\begin{equation*} \label{eq61}
 H u(x)=-u''(x)+V(x) u(x)  \qquad x\in (-\infty,\infty)
\end{equation*}
acting on $L^2(\mathbb{R})$. We assume that the potential $V(x)$ is real-valued, continuous and $V(x) \to \infty$  as $|x| \to \infty$. These conditions ensure that the operator $H$ is self-adjoint on a suitable domain and it has a compact resolvent \cite[Theorem XIII.67]{1980Barry}.
The domain of closure of the quadratic form associated to $H$ is
\[
    \dom (\frak{h}^{1}) = W^{1,2}(\mathbb{R}) \cap \{u\in L^2(\mathbb{R}):\||V|^{1/2}u\|<\infty\}.
\]
Note that this is the intersection of the maximal domains of the momentum operator and the operator of multiplication by $|V|^{1/2}$. Here and below we denote $\frak{h}\equiv\frak{a}$ for $A=H$.

The conditions on the potential imply that $V(x) \geqslant b_0>-\infty$ for all $x \in \mathbb{R}$ and a suitable constant  $b_0 \in \mathbb{R}$. This ensures that $H$ is bounded below, and $H \geq b_0$. Without loss of generality we will assume below that $b_0>0$.

From the fact that we are in one space dimension alongside with the condition of continuity on the potential, we know that all the eigenvalues of $H$ have multiplicity equal to 1. Moreover, the eigenfunctions are $C^\infty$ and they decay exponentially fast at infinity \cite[Section C.3]{1982Barry}. 
Following the notation for the generic operator $A$ above,  we write
\[
    \x{Spec}(H)=\{\lambda_1< \lambda_2<\ldots \}\subset(0,\infty).
\]
We let the orthonormal basis $\{\psi_j\}_{j=1}^\infty$ of $L^2(\mathbb{R})$ be such that \[H\psi_j=\lambda_j\psi_j.\] Without further mention,  below we often suppress the index $j$ from the eigenvalue and the eigenfunction, when it is sufficiently clear from the context.

Let $L > 0$. Consider the restricted operator
\begin{equation*} \label{eq62}
  H_L u(x)=-u''(x)+V(x) u(x)    \qquad x\in (-L,L),
\end{equation*}
subject to Dirichlet boundary conditions:   $u(-L)=u(L)=0$. As $L\to \infty$, we expect that the spectrum of $H_L$ approaches the spectrum of $H$. In fact, this turns out to happen exponentially fast (in $L$) for individual eigenvalues, whenever $V(x)$ is such that for every $b>0$ there exists a constant $k_b>0$ ensuring
\[
    |V(x)| \leqslant k_b \mathrm{e}^{b|x|}\qquad \forall x \in \mathbb{R}
\]
(see e.g. \cite[Theorem 4.4]{20133Boulton}). Without further mention, everywhere below we impose this additional assumption on the potential. 

The operator $H_L$ is self-adjoint on a domain defined via Friedrichs' extensions. Denote by $\frak{h}^{1,L}$ the quadratic form associated to $H_L$. Since $V$ is continuous on the whole of $\mathbb{R}$, then the domain of closure of  $\frak{h}^{1,L}$ is
\[
 \dom(\frak{h}^{1,L})=W^{1,2}_0(-L,L)
\]
see \cite[Theorem~VI.2.23 and VI.4.2]{kato}. 

As $b_0>0$, the forms $\frak{h}^{1}$ and $\frak{h}^{1,L}$ are both positive definite. Hence the quantities
\[
     \frak{h}^{1}(u,u)^{1/2} \qquad \text{and} \qquad   \frak{h}^{1,L}(u,u)^{1/2}
\]
define norms in $\dom(\frak{h}^{1})$ and $\dom(\frak{h}^{1,L})$ respectively.

\subsection{Finite element discretisation}
Let $\Xi$ be an equidistant partition of $[-L,L]$ into $n$ sub-intervals $I_l=[x_{l-1},x_l]$ of length $h=\frac{2L}{n}$. Let $\mathcal{L}_L^h\equiv \mathcal{L}_L^h(k,r)=V_h(k,r,\Xi)$ where 
\begin{equation}    \label{fespaces}
 V_h(k,r,\Xi)=\left\{v\in C^k(-L,L): \begin{aligned}& v \upharpoonright_{ I_l} \in \mathbb{P}_r(I_l)  \qquad 1 \leq l \leq n \\ & v(-L)=v(L)=0 
 \end{aligned} \right\}
\end{equation}
is the finite element space generated by $C^k$ conforming elements of order $r$ subject to Dirichlet boundary conditions. Here we require $k\geq 1$ and $r \geq 3$, so that $\mathcal{L}_L^h \subset \dom(\frak{h}^{2,L})$. Everywhere below we assume that these two parameters are fixed.

Let us now establish a concrete result showing that the hypothesis of Theorem~\ref{pro63}
is satisfied in the present setting. In turns this will imply that, if we implement a numerical strategy for computing $\tau_j^\pm(t)$, this implementation is convergent. 

Our first statement is \cite[Theorem~4.5]{20133Boulton}, but we include its proof for completeness.

\begin{lem} \label{thm62}
Fix $j\in \mathbb{N}$.
There exist $L_0 > 0$ large enough and $h_0>0$ small enough, such that the following is satisfied. For $L>L_0$ and $h<h_0$, we can always find $u_j \in \mathcal{L}_L^h$ such that 
\begin{enumerate}
 \item \label{c1} $\langle u_j-\psi_j, u_j-\psi_j \rangle \leq \epsilon^0(h,L)$
  \item \label{c2} $\langle H(u_j-\psi_j), u_j-\psi_j \rangle \leq \epsilon^1(h,L)$
  \item \label{c3} $\langle H(u_j-\psi_j), H(u_j-\psi_j) \rangle \leq \epsilon^2(h,L)$
\end{enumerate}
where 
\begin{align*}
\epsilon^0(h,L)&=c_{10} \mathrm{e}^{-4aL}+c_{20} h^{2(r+1)} \\
\epsilon^1(h,L)&=c_{11} \mathrm{e}^{-2aL}+c_{21} h^{2r} \\
\epsilon^2(h,L)&=c_{12}\mathrm{e}^{-2aL}+c_{22}h^{2(r-1)}
\end{align*}
for a suitable $a>0$ and constants $c_{nk}>0$ dependant on $j$, but independents of $L$ or $h$.
\end{lem}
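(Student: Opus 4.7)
The plan is to construct $u_j$ as the finite element quasi-interpolant of a smooth cut-off of $\psi_j$, and to control each of the three quantities via the triangle inequality, splitting the total error into a tail/truncation piece on $\mathbb{R}$ and an interpolation piece on $(-L,L)$.

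First, I would exploit the exponential decay of the eigenfunctions and their derivatives. Under the standing hypothesis on $V$, the Agmon-type pointwise estimates referenced via \cite{1982Barry} yield $a>0$ and $C_j>0$ with
\[
|\psi_j(x)|+|\psi_j'(x)|+|\psi_j''(x)|\leq C_j\mathrm{e}^{-2a|x|}
\]
for all large $|x|$. Fix a smooth cut-off $\chi_L\in C_c^\infty(-L,L)$ equal to $1$ on $[-L+1,L-1]$ with derivatives bounded uniformly in $L$, and set $\widetilde\psi_j=\chi_L\psi_j$. Direct integration of the pointwise bounds gives
\[
\|\psi_j-\widetilde\psi_j\|^2\lesssim\mathrm{e}^{-4aL},\qquad \mathfrak{h}^1(\psi_j-\widetilde\psi_j,\psi_j-\widetilde\psi_j)\lesssim\mathrm{e}^{-2aL},\qquad \|H(\psi_j-\widetilde\psi_j)\|^2\lesssim\mathrm{e}^{-2aL},
\]
with constants depending on $j$ but not on $L$. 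The different exponents arise because the $L^2$-piece only sees $|\psi_j|^2\sim\mathrm{e}^{-4a|x|}$, whereas the graph-norm piece picks up an extra factor $V$ (whose sub-exponential growth, controlled by the standing hypothesis with small enough $b$, consumes one factor of $\mathrm{e}^{-2a|x|}$).

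Second, since $\widetilde\psi_j\in H^{r+1}_0(-L,L)$ (the cut-off is smooth and $\psi_j\in C^\infty$), the standard Bramble--Hilbert theory for $C^k$-conforming elements of polynomial degree $r$ on an equidistant mesh (with $k\geq 1$, $r\geq 3$) produces $u_j\in\mathcal{L}_L^h=V_h(k,r,\Xi)$ with
\[
\|\widetilde\psi_j-u_j\|_{L^2}\leq Ch^{r+1}|\widetilde\psi_j|_{H^{r+1}},\qquad \|(\widetilde\psi_j-u_j)'\|_{L^2}\leq Ch^r|\widetilde\psi_j|_{H^{r+1}},\qquad \|(\widetilde\psi_j-u_j)''\|_{L^2}\leq Ch^{r-1}|\widetilde\psi_j|_{H^{r+1}}.
\]
Exponential decay of all derivatives of $\psi_j$ makes $|\widetilde\psi_j|_{H^{r+1}}$ bounded uniformly in $L$, so the three squared norms give the $h^{2(r+1)}$, $h^{2r}$ and $h^{2(r-1)}$ contributions with constants $c_{20},c_{21},c_{22}$ independent of $L$ and $h$. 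Finally, expanding $\langle H(u_j-\psi_j),H(u_j-\psi_j)\rangle$ via $H=-\partial_x^2+V$ gives a $\|(\cdot)''\|^2$ piece and a $\|V(\cdot)\|^2$ piece controlled by the former together with the $L^2$ piece, and an application of $(a+b)^2\leq 2a^2+2b^2$ after the triangle inequality in each of the three norms assembles the three claimed bounds with composite constants $c_{n k}$.

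The main obstacle is the graph-norm estimate \ref{c3}: the term $\|V(u_j-\psi_j)\|^2$ is dangerous because $V$ grows and $u_j-\psi_j$ is supported on $\mathbb{R}$ (with $u_j$ extended by zero off $(-L,L)$). Controlling it forces one to balance the exponential growth rate $b$ of $V$ against the decay rate $2a$ of the eigenfunction; one must fix $b<2a$ in the standing hypothesis, so that $V\psi_j$ is genuinely in $L^2$ with tail $\lesssim\mathrm{e}^{-2aL}$, and only then does $\|V(u_j-\psi_j)\|^2$ split cleanly into an $\mathrm{e}^{-2aL}$ truncation contribution plus an $h^{2(r-1)}$ interpolation contribution on $(-L,L)$ (where $V$ is bounded by $k_b\mathrm{e}^{bL}$, a constant for fixed $L$). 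This is the step where the quantitative hypotheses on $V$ are genuinely needed.
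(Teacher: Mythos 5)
Your overall route---interpolate the eigenfunction in the finite-element space and split every error into an exponentially small tail (from the truncation to $(-L,L)$) plus a polynomial-in-$h$ interior interpolation error---is the same as the paper's. The smooth cut-off $\chi_L$ is a refinement not present in the paper (which interpolates $\psi_j$ directly and implicitly zeros the boundary degrees of freedom); it is a cleaner way to land inside the Dirichlet finite-element space and introduces only a further $O(\mathrm{e}^{-2aL})$ perturbation, consistent with the claimed rates. So the strategy is sound.

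There is, however, a genuine gap in your treatment of the interior contribution to property \ref{c3}. You bound $V$ on $(-L,L)$ globally by $k_b\mathrm{e}^{bL}$ and multiply by the global $L^2$ interpolation error $\lesssim h^{r+1}$; squaring produces a factor $\mathrm{e}^{2bL}h^{2(r+1)}$, so the constant in front of the $h$-power grows exponentially in $L$, contradicting the assertion that the $c_{nk}$ are independent of $L$. The fix is the same balancing you already invoke for the truncation tail, but it must be applied \emph{element by element}: on an element $I_l$ at distance $d_l$ from the origin,
\[
\|V\|_{L^\infty(I_l)}^2\,\|\widetilde\psi_j-u_j\|_{L^2(I_l)}^2
\;\lesssim\; \mathrm{e}^{2bd_l}\,h^{2(r+1)}\,|\widetilde\psi_j|_{H^{r+1}(I_l)}^2
\;\lesssim\; h^{2(r+1)}\,\mathrm{e}^{-(4a-2b)d_l},
\]
using the exponential decay of all derivatives of $\psi_j$, and the sum over elements converges uniformly in $L$ precisely because $b<2a$---the very condition you isolate, but only apply to the tail. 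A secondary, more cosmetic point: $\|V(u_j-\psi_j)\|^2$ contributes at order $h^{2(r+1)}$, not $h^{2(r-1)}$; the $h^{2(r-1)}$ in $\epsilon^2$ arises from the second-derivative piece $\|(u_j-\psi_j)''\|^2$, so your phrase ``an $h^{2(r-1)}$ interpolation contribution'' for the $V$-term conflates two sources of error.
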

\begin{proof}
Below we repeatedly use the estimate
\[
    \|v-v_h\|_{W^{p,2}(-L,L)} \leq \tilde{c} h^{r+1-p}
\]
where $v_h\in V_h(k,r,\Xi)$ is the interpolate of $v\in C^k\cap W_{\Xi}^{r+1,2}(-L,L)$.
See \cite[Theorem~3.1.6]{1978Ciarlet}. We set $u=\psi_h$.

For the property~\ref{c1}, observe that
\begin{align*}
  \langle u-\psi, u-\psi \rangle&=\int_{-\infty}^{\infty} |u-\psi|^2\dx \\
&=\int_{-\infty}^{-L}\!\!\!+\int_{L}^{\infty} |u-\psi|^2\dx +\int_{-L}^{L}|u-\psi|^2\dx \\
&=\int_{-\infty}^{-L}\!\!\!+\int_{L}^{\infty} |\psi|^2\dx +\int_{-L}^{L}|u-\psi|^2\dx \\
&\leq c_{10} e^{-4aL}+c_{20} h^{2(r+1)}.
\end{align*}

For the property~\ref{c2}, observe that
\begin{align*}
&\langle H(u-\psi), u-\psi \rangle\leq \int_{-\infty}^{\infty} \left(|u'-\psi'|^2+|V(x)||u-\psi|^2\right)\dx \\
&=\int_{-\infty}^{-L}\!\!\!+\int_{L}^{\infty}|\psi'|^2\dx +\int_{-\infty}^{-L}\!\!\!+\int_{L}^{\infty}|V(x)||\psi|^2\dx 
+ \int_{-L}^{L}|u'-\psi'|^2\dx  \\
& \qquad +\int_{-L}^{L}|V(x)||u-\psi|^2\dx \\
&\leq c_{31} \mathrm{e}^{-2aL}+c_{32} \mathrm{e}^{- \frac72aL}+c_{33} h^{2r} +c_{34} h^{2(r+1)}.
\end{align*}

For the property~\ref{c3}, observe that
\begin{align*}
&\langle H(u-\psi),H(u-\psi)\rangle\\
&\leq \int_{-\infty}^{\infty}|u''-\psi''|^2\dx +\int_{-\infty}^{\infty}|V(x)|^2|u-\psi|^2\dx \\
&\qquad +2\left[\left(\int_{-\infty}^{\infty}|V(x)|^2|u-\psi|^2\dx  \right)^{1/2} \left(\int_{-\infty}^{\infty}|u''-\psi''|^2\dx \right)^{1/2}\right]\\
&\leq\left( c_{41}\mathrm{e}^{-2aL}+c_{42}h^{2(r-1)}\right)+\left( c_{43}\mathrm{e}^{-3aL}+c_{44}h^{2(r+1)}\right)\\
&\qquad +\left[\left(c_{45}\mathrm{e}^{-3aL}+c_{46}h^{2(r+1)}\right)^{1/2} \left( c_{47}\mathrm{e}^{-2aL}+c_{48}h^{2(r-1)} \right)^{1/2}\right]\\
&= c_{41}\mathrm{e}^{-2aL}+c_{42}h^{2(r-1)}+ O(\mathrm{e}^{-3aL})+O(h^{2r}).
\end{align*}
Here we employ the Cauchy-Schwarz inequality and Newton's Generalised Binomial Theorem, as well as 
the exponential decay of the eigenfunctions (see e.g. \cite[lemmas~4.1 and 4.2]{20133Boulton}).
\end{proof}

\begin{col} \label{cor61}
Let $j\in\mathbb{N}$ and $t\not \in \spec(H)$ be fixed. There exist $\tilde{c}>0$ and $L_0 >0$ large enough, and $a>0$ and $h_0>0$ small enough, ensuring the following. There always exists $u_j\in \mathcal{L}_L^h$ such that
\begin{equation*}
 ^\pm\llbracket u_j-\psi_j\rrbracket_t \leqslant \tilde{c} (h^{r-1}+e^{-aL}) 
\end{equation*}
for $L>L_0$ and $h<h_0$.
\end{col}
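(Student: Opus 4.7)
The plan is to take for $u_j$ precisely the approximation provided by Lemma~\ref{thm62}, and then to translate the three separate error estimates on $\langle v,v\rangle$, $\langle Hv,v\rangle$, and $\langle Hv,Hv\rangle$ (with $v = u_j - \psi_j$) into the single $^\pm\llbracket\cdot\rrbracket_t$-norm bound required. The essential observation is that, by definition,
\[
\frak{b}^{\pm}_t(v,v)=\mp\,\frak{h}^1_t(v,v)+(1\pm\mu^{\pm}_1(t))\,\frak{h}^2_t(v,v),
\]
so everything reduces to estimating $\frak{h}^1_t(v,v)$ and $\frak{h}^2_t(v,v)$ in terms of the three quantities $\epsilon^0(h,L)$, $\epsilon^1(h,L)$, $\epsilon^2(h,L)$ supplied by Lemma~\ref{thm62}.

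For the first-order form, expanding yields
\[
|\frak{h}^1_t(v,v)|=|\langle Hv,v\rangle-t\langle v,v\rangle|\leq \epsilon^1(h,L)+|t|\,\epsilon^0(h,L),
\]
which is of order $h^{2r}+\mathrm{e}^{-2aL}$. For the second-order form, expanding $\|(H-t)v\|^2$ and applying Cauchy--Schwarz (or simply $|2t\langle Hv,v\rangle|\leq 2|t|\,\langle Hv,v\rangle$, since $H\geq b_0>0$ forces $\langle Hv,v\rangle\geq 0$) gives
\[
\frak{h}^2_t(v,v)\leq \epsilon^2(h,L)+2|t|\,\epsilon^1(h,L)+t^2\,\epsilon^0(h,L),
\]
which is of order $h^{2(r-1)}+\mathrm{e}^{-2aL}$, since the polynomial decay in $h$ is slowest in $\epsilon^2$ and the exponential decay in $L$ is dominated by $\mathrm{e}^{-2aL}$ across all three $\epsilon^i$.

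Combining the two estimates in the definition of $\frak{b}^{\pm}_t$ with an absolute constant depending on $t$, $\mu^{\pm}_1(t)$, and $j$, one obtains
\[
\frak{b}^{\pm}_t(v,v)\leq C\,\bigl(h^{2(r-1)}+\mathrm{e}^{-2aL}\bigr),
\]
for $L$ large and $h$ small, where $a>0$ is the decay rate extracted in Lemma~\ref{thm62} (possibly halved at the end so that it also absorbs the weaker $\mathrm{e}^{-3aL}$ and $\mathrm{e}^{-\frac72 aL}$ terms). Taking square roots and using $\sqrt{p+q}\leq\sqrt{p}+\sqrt{q}$ for nonnegative $p,q$, one concludes
\[
{}^\pm\!\llbracket u_j-\psi_j\rrbracket_t\leq \tilde c\,(h^{r-1}+\mathrm{e}^{-aL}),
\]
with a constant $\tilde c$ depending on $j$ and $t$ but independent of $L$ and $h$. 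The only mild obstacle is bookkeeping the sign of $t$ and the possibility that $t\in(-\infty,0]$; in that case one uses $\langle Hv,v\rangle\geq 0$ to absorb the cross term in $\frak{h}^2_t(v,v)$ without changing its order, so the argument goes through uniformly in the allowed range of $t$.
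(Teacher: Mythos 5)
Your proof is correct and follows essentially the same route as the paper: take the $u_j$ from Lemma~\ref{thm62}, expand $\frak{b}^{\pm}_t(v,v)$ with $v=u_j-\psi_j$ into the three inner products $\langle v,v\rangle$, $\langle Hv,v\rangle$, $\langle Hv,Hv\rangle$, bound each by $\epsilon^0,\epsilon^1,\epsilon^2$, observe that $h^{2(r-1)}+\mathrm{e}^{-2aL}$ dominates, and take square roots. If anything you are slightly more careful than the paper's presentation in handling the sign of $t$ via $\langle Hv,v\rangle\geq 0$, which the paper elides by simply writing the expansion term by term.
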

\begin{proof}
Let $u_j\in\mathcal{L}_L^h$ be as in the previous lemma. Then
\begin{align*}
\frak{b}_t(u_j-\psi_j,u-\psi_j)&=\frak{h}^1_t(u_j-\psi_j,u_j-\psi_j)+\alpha(t)\;\frak{h}^2_t(u_j-\psi_j,u_j-\psi_j)\\
&=\langle H(u_j-\psi_j), u_j-\psi_j \rangle-t\langle u_j-\psi_j, u_j-\psi_j \rangle\\
&+\alpha(t) \big[\langle H(u_j-\psi_j), H(u_j-\psi_j) \rangle \\ 
&-2t\langle H(u_j-\psi_j),u_j-\psi_j \rangle+t^2\langle u_j-\psi_j, u_j-\psi_j \rangle \big]\\
&\leq \epsilon^1(h,L)-t\epsilon^0(h,L)+\alpha(t) \big[\epsilon^2(h,L)\\
& -2t\epsilon^1(h,L)+t^2 \epsilon^0(h,L) \big].
\end{align*}
This ensures the existence of the required constants.\end{proof}

\begin{thm} \label{cor62}
 Let $t\in (-\infty,\lambda_{m+1})\setminus \spec(H)$. Let $a>0$ be as in the previous corollary. There exist constants $C^\pm_{t,m}>0$, such that
\[
         0<\lambda_{\ell(t) -k+1} -\left( t+\frac{1}{\tau_k^-(t)}  \right) \leq C^-_{t,m}(h^{2(r-1)}+e^{-2aL})
\qquad \forall k=1,\ldots,m^-(t)
\]
and
\[
         \begin{aligned} 0< \left( t+\frac{1}{\tau_k^+(t)}\right) - \lambda_{\ell(t) +k}  &\leq  C^+_{t,m} (h^{2(r-1)}+e^{-2aL})
\\ & \forall k=1,\ldots,\min\{m^+(t),m-\ell(t)\}. \end{aligned}
\]
\end{thm}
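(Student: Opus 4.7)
The strategy is a direct combination of Theorem~\ref{pro63} (the abstract convergence result) with Corollary~\ref{cor61} (the finite element approximation estimate in the relevant norm). First, I would apply Corollary~\ref{cor61} to each eigenfunction $\psi_j$ for $j=1,\ldots,m$, producing $v_j \in \mathcal{L}_L^h$ with
\[
^\pm\llbracket \psi_j - v_j \rrbracket_t \;\leq\; \varepsilon_j \;:=\; \tilde{c}\,(h^{r-1}+e^{-aL})
\]
for all $L>L_0$ and $h<h_0$. Setting
\[
\varepsilon \;=\; \Bigl(\sum_{j=1}^m \varepsilon_j^2\Bigr)^{1/2} \;\leq\; \sqrt{m}\,\tilde{c}\,(h^{r-1}+e^{-aL}),
\]
and shrinking $h_0$ and enlarging $L_0$ if necessary, we may assume $\varepsilon < 1/\sqrt{6}$ so that the hypothesis of Theorem~\ref{pro63} is met.

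Next, I would feed this into Theorem~\ref{pro63} directly. The theorem gives
\[
0 < \lambda_{\ell(t)-k+1} - \Bigl(t+\tfrac{1}{\tau_k^-(t)}\Bigr) \leq \frac{C_t}{|\tau_k^-(t)|}\,\varepsilon^2
\]
and the analogous ``$+$'' inequality. Using the elementary bound $(h^{r-1}+e^{-aL})^2 \leq 2(h^{2(r-1)} + e^{-2aL})$, the quantity $\varepsilon^2$ is at most a constant multiple of $h^{2(r-1)} + e^{-2aL}$. Absorbing $m$, $\tilde{c}^2$, and the factor $2$ into the constant would give the claimed rate, provided that $|\tau_k^\pm(t)|$ is controlled from below.

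The one point that needs attention is precisely this control: we must ensure that $|\tau_k^\pm(t)|$ is bounded away from $0$ uniformly for small $h$ and large $L$, so that the factor $1/|\tau_k^\pm(t)|$ can be absorbed into $C^{\pm}_{t,m}$. This follows because Theorem~\ref{pro63} (applied with the same $\varepsilon$) already forces $\tau_k^{\pm}(t)$ to be close to $\mu_k^{\pm}(t) = 1/(\lambda_{\ell(t)\mp k+1}-t) \neq 0$; once $\varepsilon$ is small enough, we have, say, $|\tau_k^\pm(t)| \geq \tfrac{1}{2}|\mu_k^\pm(t)|$, which depends only on $t$, $m$, and $\spec(H)$. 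Shrinking $h_0$ and enlarging $L_0$ one more time if necessary then yields the desired constants $C^\pm_{t,m}$.

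The main obstacle, such as it is, is a bookkeeping one: keeping track of the fact that the $m$ in Theorem~\ref{pro63} must cover all eigenfunctions relevant to the indices $k=1,\ldots,m^\pm(t)$ that appear in the conclusion, and ensuring that $h_0$ and $L_0$ can be chosen uniformly across these finitely many indices. Since both $m^-(t)\leq \ell(t) \leq m$ and $\min\{m^+(t),m-\ell(t)\}\leq m-\ell(t)\leq m$, the indices involved lie in $\{1,\ldots,m\}$, so the same $\tilde c$ and $a$ from Corollary~\ref{cor61} (applied to $\psi_1,\ldots,\psi_m$) suffice, and a single pair $(L_0,h_0)$ works for the whole conclusion.
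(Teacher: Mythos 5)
Your proposal is correct and follows exactly the route the paper takes: the paper's proof of Theorem~\ref{cor62} is literally the one-liner ``a direct consequence of combining Theorem~\ref{pro63} with Corollary~\ref{cor61}.'' You fill in the bookkeeping the paper leaves implicit, and in particular you correctly flag the one non-trivial point --- that $1/|\tau_k^\pm(t)|$ must be bounded above, which follows since $\tau_k^\pm(t)\to\mu_k^\pm(t)\neq 0$ as $\varepsilon\to 0$ --- which the paper's terse proof silently absorbs into the constant.
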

\begin{proof}
The proof is a direct consequence of combining Theorem~\ref{pro63} with Corollary~\ref{cor61}.
\end{proof}

In the next section we consider two  models with concrete potentials and explore numerically the scope of this theorem.


\section{Numerical experiments} \label{numex}
In all the examples below we consider Hermite elements of order $r=3$ which are $C^1$ conforming, so $\mathcal{L}_L^h$ are as in \eqref{fespaces} with $v\!\upharpoonright_{I_l}$ a linear combination of two basis polynomials of order 3. 

As for model hamiltonians, we consider the quantum harmonic and anharmonic oscillators. 
Let $V(x)=x^2$ and $H^{\x{har}}\equiv H$. The exact eigenvalues are 
\[
     \x{Spec} (H^{\x{har}}) = \{2j+1: j=0,1,\ldots\}
\]
and the corresponding  eigenfunctions are
\[
    \psi_j(x)=h_j(x)e^{-x^2/2}
\]
where $h_j$ are the Hermite polynomial of order $j$.  
Let $H^{\x{anh}}=H$ for $V(x)=x^4$. In this case the exact eigenvalues and eigenfunctions
 are not known explicitly.

\subsection{Eigenvalue bounds and order of convergence}
Theorem~\ref{cor62} shows that the limits \eqref{bounds} are convergent at a rate proportional to $h^{2(r-1)}+e^{-2aL}$ in the regime $L\to \infty$ and $h\to 0$, when the trial spaces are chosen to be $\mathcal{L}=\mathcal{L}_L^h$. In all calculation below we fix $L=6$. 

The eigenvalue problem \eqref{eq31} is equivalent to that of the matrix problem
\begin{equation*}
 \tau \; \mathbf{A}_{2,t} = \mathbf{A}_{1,t}.
\end{equation*}
 In the numerical computations presently conducted, we have found $\tau$ from the solution of this linear eigenvalue problem.
The coefficients of the matrices $\mathbf{A}_0$, $\mathbf{A}_1$ and $\mathbf{A}_2$, were all computed 
analytically. See the Appendix in \cite{20133Boulton}.

Table \ref{table8} shows approximation of five eigenvalues of $H_L^{\x{har}}$ and $H_L^{\x{anh}}$ with $n=400$,
by means of the Galerkin method and by means of the Lehmann-Maehly-Goerisch method. The trial subspace has been taken exactly the same in both cases. The Galerkin method give certified upper bounds for $\lambda_j$. Observe that these upper bounds are roughly four orders of magnitude sharper than those obtained from an application of the   Lehmann-Maehly-Goerisch method with $t=-20$. See Figure~\ref{fig_con}.

\begin{rem} \label{rem_new2}
Recall Remark~\ref{rem_new1}. There is a strong indication that the numerical quantities shown in Table~\ref{table8} correspond to the first five eigenvalues of $H^{\x{anh}}$. This is certainly the case for 
$H^{\x{har}}$. However, without any \emph{a priori} information about 
the position of the sixth eigenvalue (not shown), this claim cannot be made with mathematical rigour. In the numerical experiments of this section we have  ``abused'' slightly the notation and written the index as $j$ for the eigenvalues of both $H^{\x{har}}$ and $H^{\x{anh}}$.  
\end{rem}

\begin{table}[t]
\centerline{\begin{tabular}{ |c |l| l| }
  \hline
  $j$ &  harmonic &  anharmonic \\ \hline
  1 & $1.00000000000018$ & $1.06036209048485$ \\ &  $^{1.00000000274037}_{0.99999999402733}$ & $1.060362^{10271726}_{05784546}$  \\ \hline
  2 & $3.00000000000167$ & $3.79967302981065$ \\
  & $^{3.00000004427331}_{2.99999993414717}$ & $3.79967^{336336235}_{266822753}$ \\ \hline
  3 & $5.00000000001386$ & $7.45569793805316$\\
   & $^{5.00000020285501}_{4.99999969518901}$ & $7.455^{70118941346}_{69223027522}$ \\ \hline
  4 & $7.00000000018134$ & $11.64474551167977$ \\
  & $^{7.00000063700910}_{6.99999905084962}$ & $11.6447^{5785577082}_{3508597596}$  \\ \hline
  5 & $9.00000000261104$ & $16.26182601985996$\\ & $^{9.00000158165711}_{8.99999763441928}$ & $16.261^{88816616746}_{79878260359}$ \\ \hline
  \end{tabular}
}
\caption{Upper bounds and approximated enclosures for five eigenvalues $\lambda_{j}$ of $H_6^{\x{har}}$ and $H_6^{\x{anh}}$. Here we have fixed $n=400$. In each row, the quantities on the top correspond to upper bounds determined by means of the Galerkin method, while the enclosures at the bottom correspond to those found by the method describe in Section~\ref{zimesec}. For the latter, the upper bounds were found by fixing $t=-20$ and the lower bounds were found by fixing $t=20$. \label{table8}}
\end{table}

Figure~\ref{fig8} shows $n$ versus the size of the eigenvalue enclosure $\lambda_{j,\x{up}}-\lambda_{j,\x{low}}$, for $\lambda_{j,\x{up}}>\lambda_j$ an upper bound found from \eqref{upper} and $\lambda_{j,\x{low}}<\lambda_j$ a lower bound found from \eqref{lower}. In this figure the slopes are fairly close to $4$, indicating that 
the convergence rates established in Theorem~\ref{cor62} are optimal.

\begin{figure}[t]
\centering
\includegraphics[width=4.5cm]{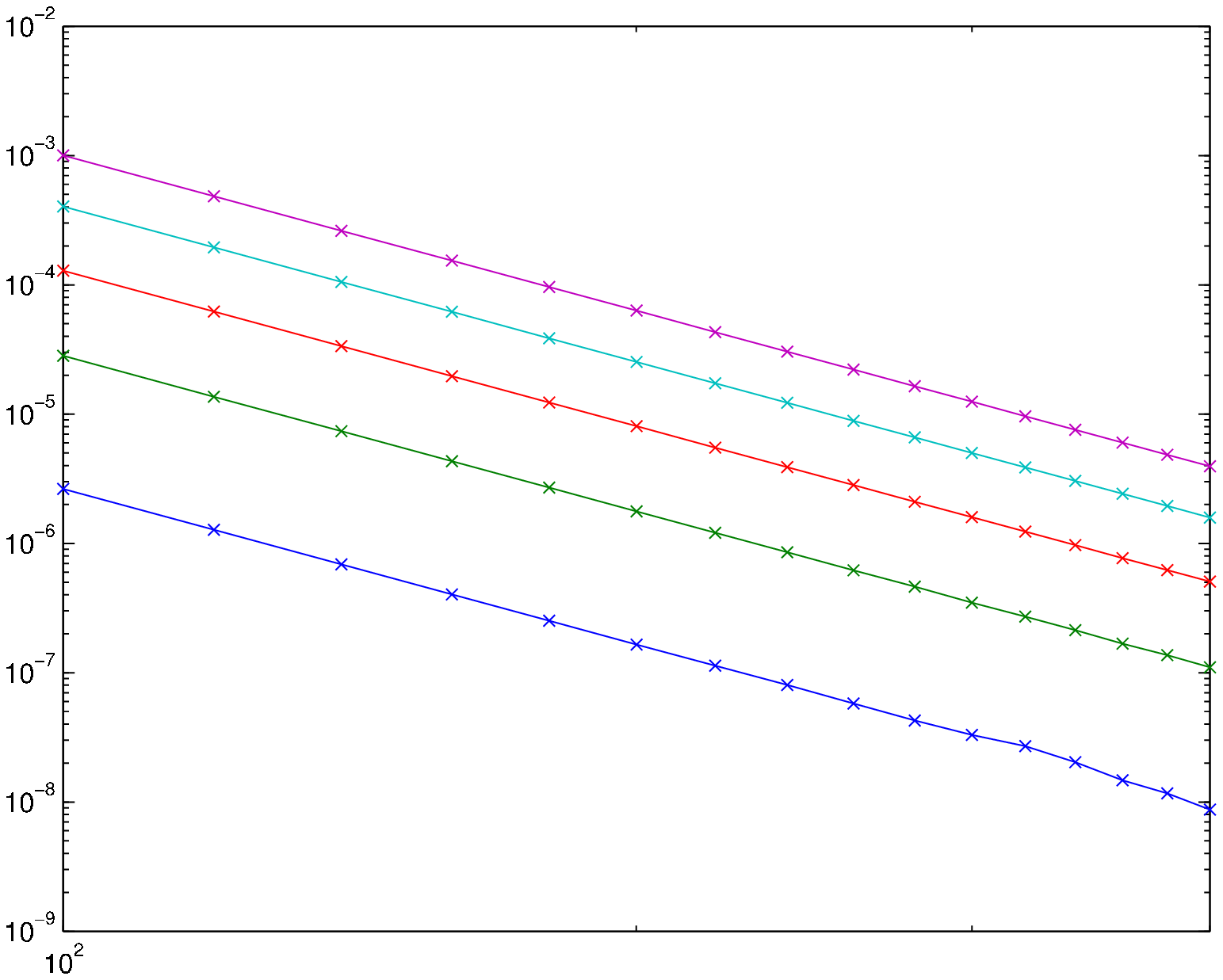}
\includegraphics[width=4.5cm]{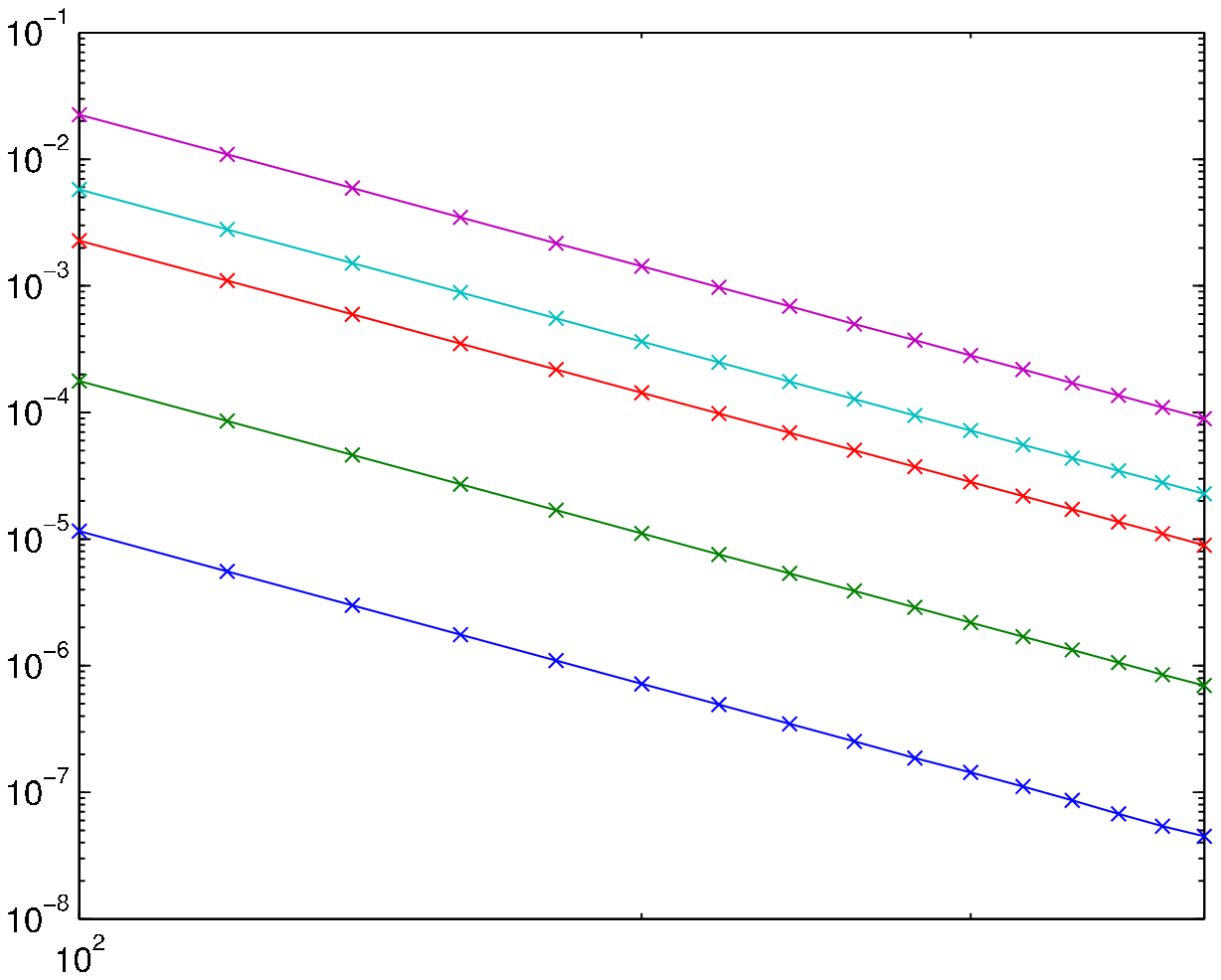}
\includegraphics[width=2cm]{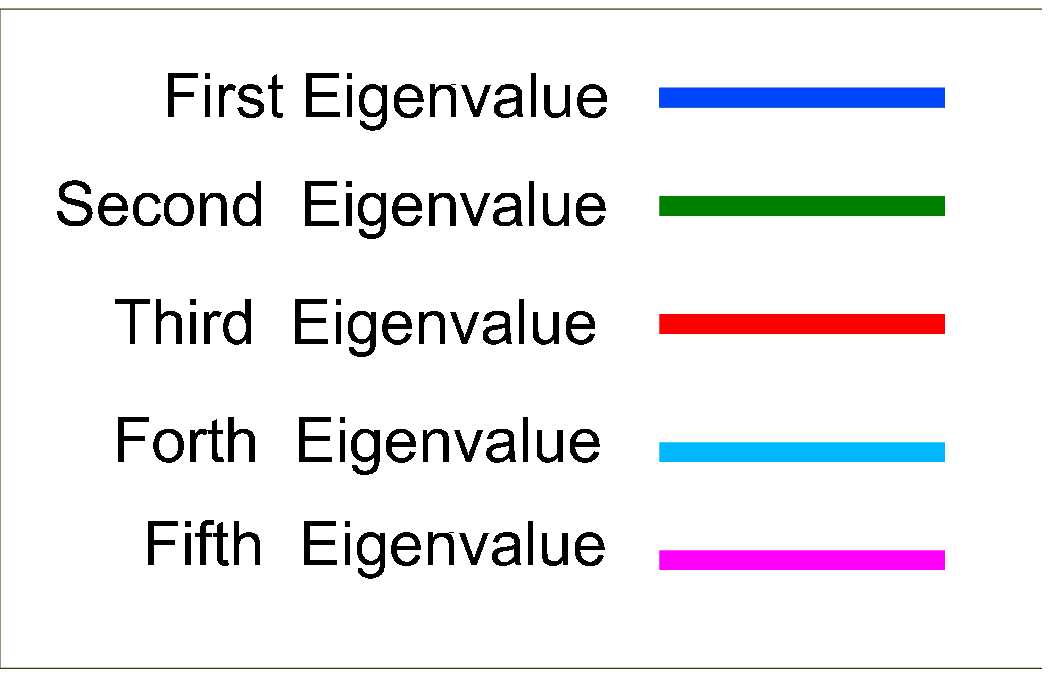}
\caption{Loglog plot of the residual $\lambda_{j,\x{up}}-\lambda_{j,\x{low}}$ as $n$ increases for $H_6^{\x{har}}$ (left) and $H_6^{\x{anh}}$ (right). Upper bounds are found by fixing $t=-20$ and lower bounds are found by fixing $t=20$.
Here $n$ runs from 100 to 400.\label{fig8}}
\end{figure}

\subsection{Large trial spaces and truncation error}
When the size of the matrices increases, the residuals shown in Figure~\ref{fig8} reach a threshold. After this threshold, truncation error in (finite) 16 digits precision takes over. We show this phenomenon in Figure~\ref{fig9}. Accurate approximation of the enclosures for each individual eigenvalues for $n$ large, but chosen below this threshold, are given in Table~\ref{table9}. 

\begin{figure}[t]
\centering
\includegraphics[width=4.5cm]{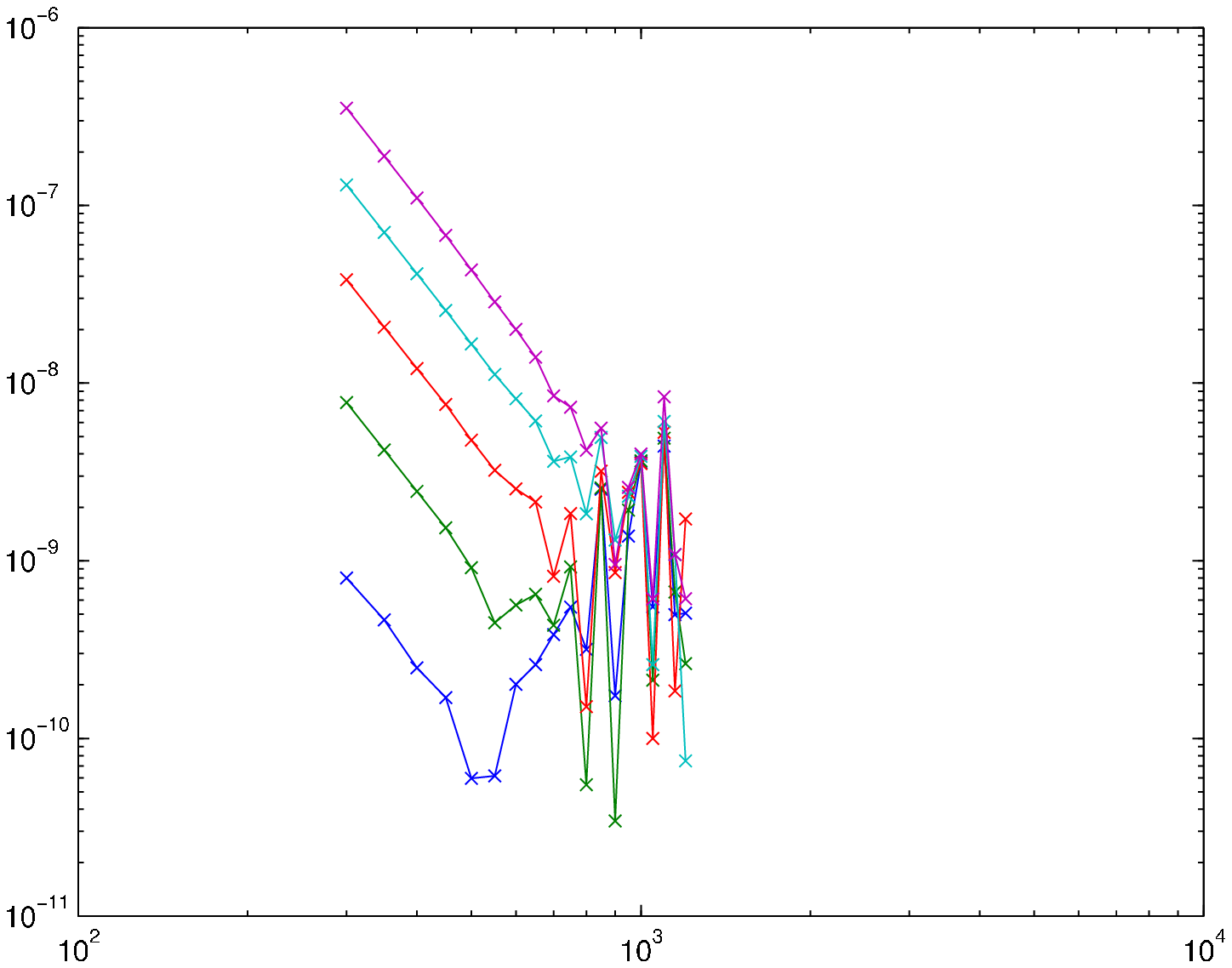}
\includegraphics[width=4.5cm]{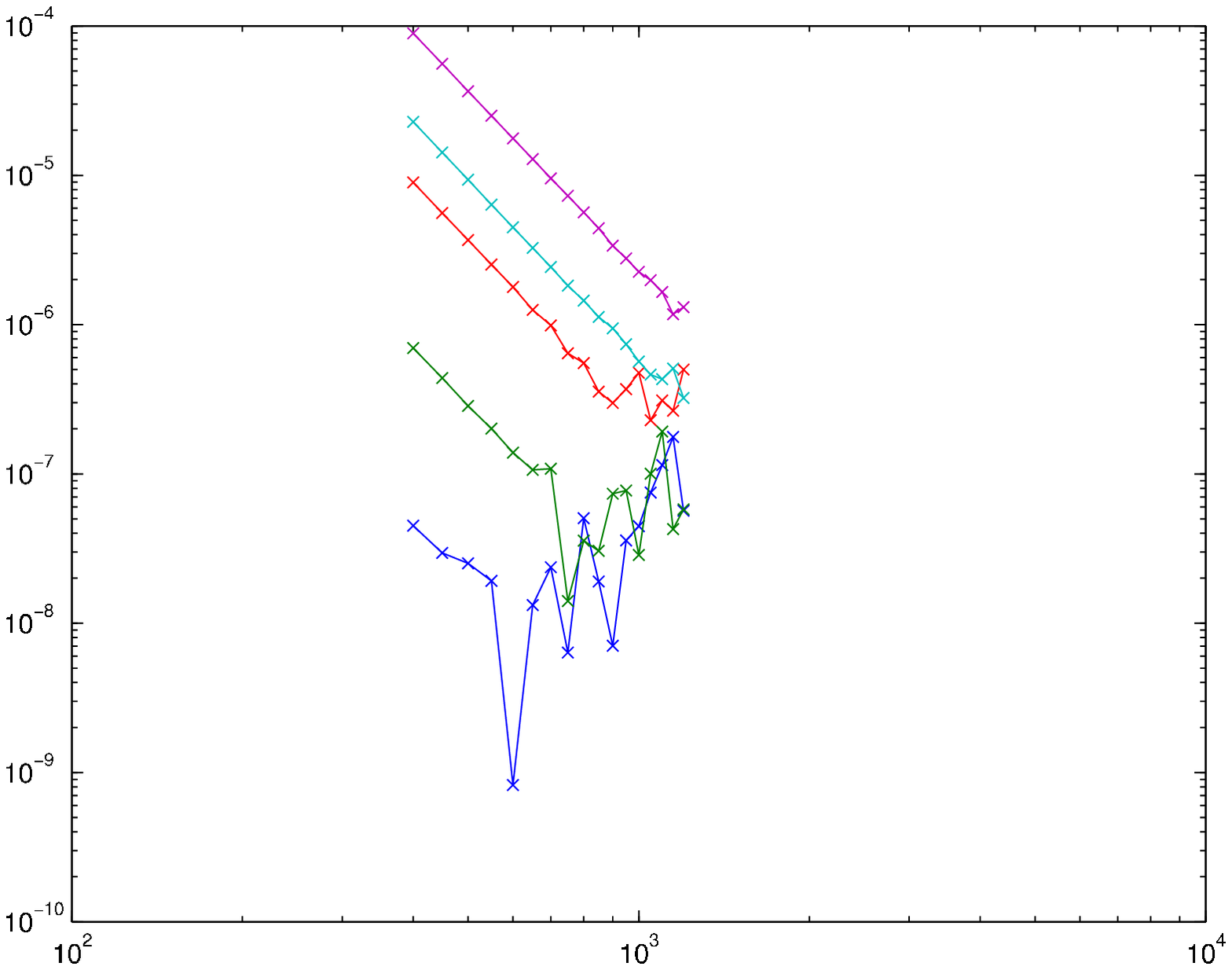}
\includegraphics[width=2cm]{legend7}
\caption{
Loglog plot of the residual $\lambda_{j,\x{up}}-\lambda_{j,\x{low}}$ as $n$ becomes very large for $H_6^{\x{har}}$ (left) and $H_6^{\x{anh}}$ (right). Upper bounds are found by fixing $t=-20$ and lower bounds are found by fixing $t=20$. Here $n$ runs from 300 to 1050. \label{fig9}}
\end{figure}

\begin{table}[t]
\centerline{\begin{tabular}{ |l |c|c||c|c| r| }
  \hline
  $j$ & $n$ & harmonic & $n$ &  anharmonic \\ \hline
  1 &450 &$^{1.0000000027}_{0.9999999943}$ &550 & $1.0603620^{963}_{770}$  \\ \hline
  2 &500 &$^{3.0000000197}_{2.9999999684}$ &650 & $3.79967^{30847}_{29783}$ \\ \hline
  3 &550 &$^{5.0000000452}_{4.9999999516}$ &750 & $7.45569^{81982}_{755622}$ \\ \hline
  4 &650 &$^{7.0000009852}_{6.9999998775}$ &950 & $11.644745^{9324}_{1934}$  \\ \hline
  5 &700 &$^{9.0000001865}_{8.9999997865}$ &1050 & $16.26182^{73730}_{53927}$ \\ \hline
  \end{tabular}}
\caption{Critical value of $n$ before truncation error takes over in the calculation of the upper and lower eigenvalue bounds. Upper bounds are found by fixing $t=-20$ and lower bounds are found by fixing $t=20$. \label{table9}}
\end{table}

\subsection{Influence of the shift on the eigenvalue bounds}
We now examine the influence of the choice of $t$ on the quality of the eigenvalue bounds, as described in Section~\ref{impzimesec}. In Table~\ref{table8} we fixed $t=-20$ in order to compute the upper eigenvalue bounds and $t=20$ in order to compute the lower eigenvalue bounds. Figure~\ref{fig10} shows different choices of $t$ versus $\lambda_{j,\x{up}}-\lambda_{j,\x{low}}$ in semi-log scale. As predicted by Theorem~\ref{lem52}, the further the $t$ moves away from the spectrum, the more accurate the enclosure becomes.

\begin{figure}[t]
\centering
\includegraphics[width=4.5cm]{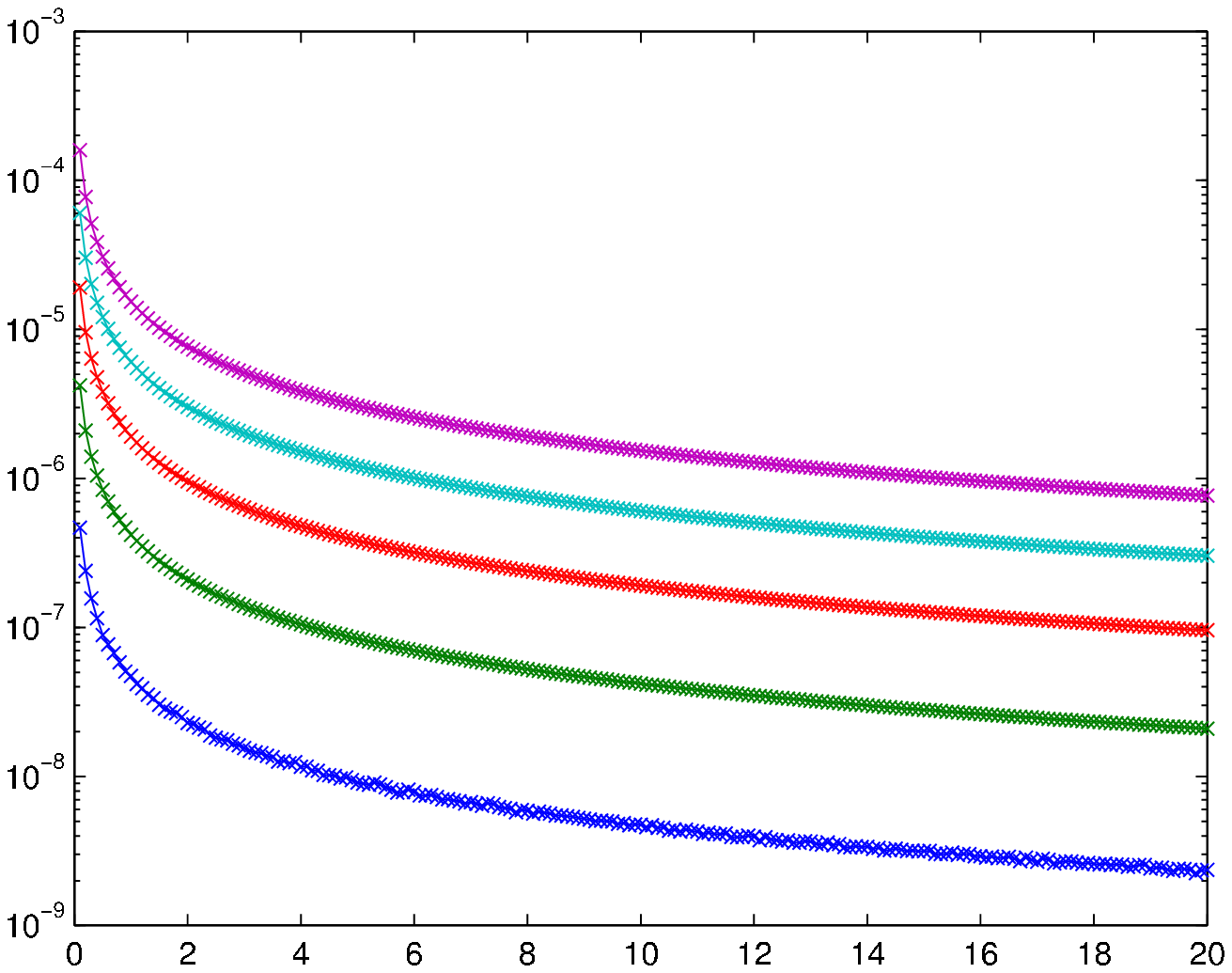}
\includegraphics[width=4.5cm]{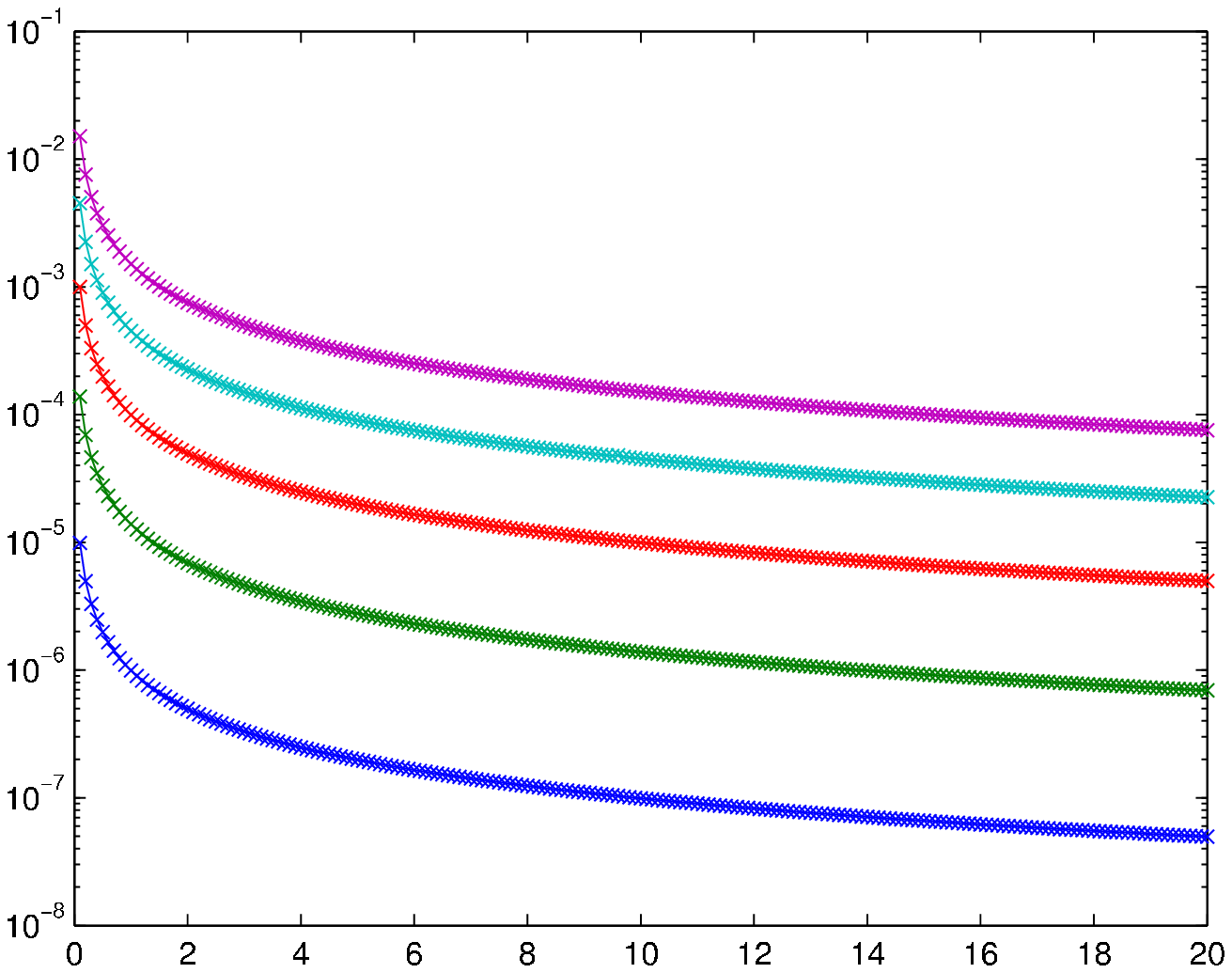}
\includegraphics[width=2cm]{legend7}
\caption{Semi-log plot of the residual $\lambda_{j,\x{up}}-\lambda_{j,\x{low}}$ as the shift $t$ 
moves away from the spectrum for $H_6^{\x{har}}$ (left) and $H_6^{\x{anh}}$ (right). Here $n=200$. \label{fig10}}
\end{figure}

The Galerkin method is widely acknowledged to be the standard approach for computing upper bound for eigenvalues. See Table~\ref{table8}. From the illustration shown in Figure \ref{fig_con}, it is natural to predict that the upper bounds \eqref{upper} approach those provided by the Galerkin method in the regime $t\to -\infty$.

\begin{figure}[t] 
\includegraphics[width=9cm]{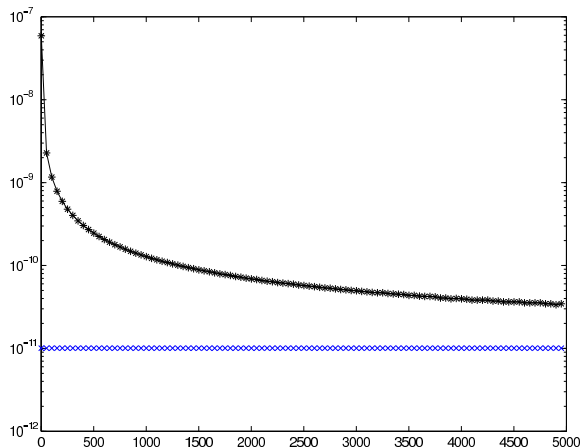}
\caption{Semi-log plot for $H_6^{\x{har}}$, comparing Galerkin upper bounds against those provided by the method of this paper. Here the vertical axis is $\lambda_{1,\x{up}}-1$, the horizontal axis is $-t$, the blue marks correspond to bounds computed by means of the Galerkin method. \label{fig_con} }
\end{figure}

\section*{Acknowledgements}
Financial support was provided by the Engineering and Physical Sciences Research Council (grant number EP/I00761X/1) and King Abdulaziz University.

\bibliographystyle{plain}

 \end{document}